\newcommand{\Assouad}{\dim_{\mathrm{A}}}
\newcommand{\Haus}{\dim_{\mathrm{H}}}
\renewcommand{\epsilon}{\varepsilon}
\renewcommand{\limsup}{\varlimsup}
\numberwithin{equation}{section}
\newtheorem{theorem}{Theorem}[section]
\newtheorem{lemma}[theorem]{Lemma}
\newtheorem{corollary}[theorem]{Corollary}
\newtheorem{question}[theorem]{Question}
\theoremstyle{definition}
\newtheorem{notation}[theorem]{Notation}
\title[weak APs and multi-dimensional Szemer'edi's theorem]{New bounds for dimensions of a set\\ uniformly avoiding multi-dimensional\\ arithmetic progressions}
\date{}
\thanks{The author announced the one dimensional cases of the results of this paper in the conferences titled `Research on the Theory of Random Dynamical Systems and Fractal Geometry' in Kyoto University on 31st August, 2019, and `RIMS Workshop 2019, Analytic Number Theory and Related Topics' in Kyoto University on 18th October, 2019. }
\author[K. Saito]{ Kota Saito }
\address{Kota Saito\\
Graduate School of Mathematics\\ Nagoya University\\ Furocho\\ Chikusa-ku\\ Nagoya\\ 464-8602\\ Japan }
\curraddr{}
\email{m17013b@math.nagoya-u.ac.jp}
\thanks{}
\subjclass[2010]{Primary: 11B25, 28A80.}
\keywords{fractal dimensions, Assouad dimension, arithmetic progressions, arithmetic patches, multi-dimensional Szemer\'edi's theorem}
\begin{document}
\maketitle

\begin{abstract}
Let $r_k(N)$ be the largest cardinality of a subset of $\{1,\ldots,N\}$ which does not contain any arithmetic progressions (APs) of length $k$. In this paper, we give new upper and lower bounds for fractal dimensions of a set which does not contain $(k,\epsilon)$-APs in terms of $r_k(N)$, where $N$ depends on $\epsilon$. Here we say that a subset of real numbers does not contain $(k,\epsilon)$-APs if we can not find any APs of length $k$ with gap difference $\Delta$ in the $\epsilon \Delta$-neighborhood of the set. More precisely, we show multi-dimensional cases of this result.  As a corollary, we find equivalences between multi-dimensional Szemer\'edi's theorem and bounds for fractal dimensions of a set which does not contain multi-dimensional $(k,\epsilon)$-APs. 
\end{abstract}

\section{introduction}
A real sequence $(a_j)_{j=0}^{k-1}$ is called an arithmetic progression of length $k$ if there exists $\Delta>0$ such that
\[
a_j=a_0+ \Delta j
\]
for all $j=0,1,\ldots, k-1$. We say $\Delta$ is the gap difference of $(a_j)_{j=0}^{k-1}$. It is a big problem to show the existence or non-existence of arithmetic progressions in a given set. Recently, we get great progresses on the problem. For example, Green and Tao proved that the set of prime numbers contains arbitrarily long arithmetic progressions \cite{GreenTao}. 

Let us define an arithmetic patch which is a higher dimensionalized arithmetic progression. Let $\mathbf{v}=\{v_1,\ldots, v_m\}$ be a set of orthogonal unit vectors in $\mathbb{R}^d$ where $1\leq m\leq d$. For every $k\in \mathbb{N}$ and $\Delta>0$, we say that a set $P\subset \mathbb{R}^d$ is an \textit{arithmetic patch (AP)} of size $k$ and scale $\Delta$ with respect to orientation $\mathbf{v}$ if
\[
   P=\left\{t+\Delta\sum_{i=1}^m x_iv_i \ : \ x_i=0,1,\ldots,k-1 \right\}
\]
for some $t\in \mathbb{R}^d$. For every $\epsilon\in [0,1/2)$, we say that $Q \subset \mathbb{R}^d$ is a $(k, \varepsilon, \mathbf{v})${\it -AP} if there exists an arithmetic patch $P$ of size $k$, and scale $\Delta >  0$ with respect to orientation $\mathbf{v}$ such that
\begin{equation}\label{f1.2}
\sup_{x\in P} \inf_{y \in Q} \|x-y\| \leq \epsilon \Delta.
\end{equation}
Note that $(k,0,\mathbf{v})$-APs are arithmetic patches of size $k$ with orientation $\mathbf{v}$. Fraser and Yu gave the original notion of $(k,\epsilon,\mathbf{v})$-APs in \cite{FraserYu}. The term $(k,\epsilon,\mathbf{v})$-APs was firstly seen in \cite{FraserSaitoYu}. The existence of $(k,\epsilon, \mathbf{v})$-APs of a given set $F$ is connected with the Assouad dimension of $F$. Fraser and Yu showed that a subset of $\mathbb{R}^d$ has Assouad dimension $d$ if and only if the set contains $(k, \varepsilon, \mathbf{v})$-APs for every $k\geq 3$, $\epsilon>0$, and basis $\mathbf{v}$. Here the orthogonality of $\mathbf{v}$ does not require in their paper and they consider not only $\mathbb{R}^d$ but also any finitely dimensional Banach spaces. Note that Fraser and Yu say that $F$ aymptotically contains arbitrarirly large arithemetic patches in \cite{FraserYu} instead that $F$ contains $(k, \varepsilon, \mathbf{e})$-APs for every $k\geq 3$, $\epsilon>0$ where $\mathbf{e}$ denotes some fixed basis on a finitely dimensional Banach space. Furthermore, Fraser, the author, and Yu gave the quantitative upper bound of the Assouad dimension of a subset of $\mathbb{R}^d$ which does not contain $(k,\epsilon, \mathbf{v})$-APs as follows:
\begin{theorem}[{\cite[Theorem~5.1]{FraserSaitoYu}}] \label{FraserSYu}
   Fix integers $m$ and $d$ with $1 \leq m\leq d$, and fix $k \geq 2$ and $\epsilon \in (0,1/\sqrt{d})$. Let $F\subseteq \mathbb{R}^d$. If $F$ does not contain $(k,\epsilon,\mathbf{v})$-APs for some a set of orthogonal unit vecters $\mathbf{v}=\{v_1,\ldots, v_m\}$, then we have
   \[
   \dim_{\mathrm{A}} F\leq d+\frac{\log (1-1/k^m)}{\log (k\lceil \sqrt{d}/(2\varepsilon)\rceil)}.
   \]
\end{theorem}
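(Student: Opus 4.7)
I would work in a coordinate system rotated so that $v_1,\ldots,v_m$ coincide with the first $m$ standard basis vectors; this leaves all Euclidean quantities (and in particular $\Assouad F$) unchanged. Set $n:=\lceil \sqrt{d}/(2\epsilon)\rceil$ and fix $x \in F$ together with scales $0 < r < R$. The goal is to bound the covering number $N_r(F\cap B(x,R))$ by a constant depending only on $d$ times $(R/r)^{s}$, where $s := d+\log(1-1/k^m)/\log(kn)$; this yields $\Assouad F \leq s$, which is the claim.

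The proof is then a single-scale covering reduction iterated. Let $C$ be any axis-aligned cube of side-length $knr$, partitioned into $(kn)^d$ axis-aligned sub-cubes of side-length $r$, indexed by $\vec{\imath} = (i_1,\ldots,i_d) \in \{0,\ldots,kn-1\}^d$. I would sort these sub-cubes into arithmetic groups: for each $(s_1,\ldots,s_m,i_{m+1},\ldots,i_d) \in \{0,\ldots,n-1\}^m \times \{0,\ldots,kn-1\}^{d-m}$, collect the $k^m$ sub-cubes whose $j$th index equals $s_j + n x_j$ for $j\leq m$ (with $x_j \in \{0,\ldots,k-1\}$) and agrees with the prescribed value for $j>m$. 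A short count shows this partitions the $(kn)^d$ sub-cubes into $n^m(kn)^{d-m}$ groups of size $k^m$, and within each group the centers form an arithmetic patch of size $k$, scale $nr$, and orientation $\mathbf{v}$.

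Suppose some group had all $k^m$ of its sub-cubes meeting $F$; pick one point $y_{\vec{x}} \in F$ from each, and let $P$ be the arithmetic patch whose points are the $k^m$ centers of the sub-cubes in the group, so $P$ has scale $\Delta:=nr$ and orientation $\mathbf{v}$. Each point of $P$ lies within $\sqrt{d}\,r/2$ of the corresponding $y_{\vec{x}}$, because each sub-cube has diameter $\sqrt{d}\,r$. By the choice of $n$ we have $\sqrt{d}\,r/2 \leq \epsilon n r = \epsilon \Delta$, so this $P$ satisfies \eqref{f1.2} with $Q=F$, exhibiting $F$ as containing a $(k, \epsilon, \mathbf{v})$-AP and contradicting the hypothesis. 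Hence in every group at least one sub-cube is disjoint from $F$, so at most $(1 - 1/k^m)(kn)^d$ of the $(kn)^d$ sub-cubes of $C$ meet $F$.

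Iterating is then routine: cover $B(x,R)$ by $O_d(1)$ axis-aligned cubes of side $R$, apply the reduction $j$ times to descend to scale $R/(kn)^j$, obtaining $N_{R/(kn)^j}(F\cap B(x,R)) \leq C_d \bigl[(kn)^d(1-1/k^m)\bigr]^j$. Choosing $j$ so that $R/(kn)^j$ is comparable to $r$ converts the right-hand side into a constant multiple of $(R/r)^{d+\log(1-1/k^m)/\log(kn)}$, which completes the argument. I expect the main technical hurdle to be the one-step geometric-combinatorial lemma of the preceding paragraph --- in particular, verifying both that the prescribed grouping really partitions the grid into arithmetic patches with orientation $\mathbf{v}$ and that the inequality $\sqrt{d}\,r/2 \leq \epsilon nr$ correctly converts a filled group of sub-cubes into an honest $(k,\epsilon,\mathbf{v})$-AP in $F$. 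Once this lemma is in hand, the iteration and the passage to Assouad dimension are standard.
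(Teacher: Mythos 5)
Your argument is correct: the one-step lemma (partition a cube of side $knr$ into $(kn)^d$ cells, group them into $n^m(kn)^{d-m}$ arithmetic groups of $k^m$ cells whose centers form an arithmetic patch of scale $nr$, and note that a fully occupied group would produce a $(k,\epsilon,\mathbf{v})$-AP inside $F$ since $\sqrt{d}\,r/2\leq\epsilon nr$) is sound, the counting $(kn)^d-n^m(kn)^{d-m}=(1-1/k^m)(kn)^d$ is right, and the iteration plus the choice of $j$ with $(kn)^j\asymp R/r$ gives the Assouad bound with constants depending only on $d,k,n$. Note, however, that the paper does not prove this theorem at all: it is quoted from Fraser--Saito--Yu, and the closest in-paper argument is the proof of Theorem~\ref{Multi-main0}, which your proof parallels very closely (same rotation to the standard basis, same grid subdivision of a cube, same ``a filled configuration of cells yields a forbidden approximate patch'' contradiction, same self-similar iteration and passage to $\dim_{\mathrm{A}}$). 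The genuine differences are two. First, where you argue that each group of $k^m$ aligned cells must miss $F$ in at least one cell --- effectively the choice $N=k$ together with the trivial bound $r_{k,m}(k)\leq k^m-1$, which is exactly what produces the numerator $\log(1-1/k^m)$ --- the paper keeps $N$ general and counts occupied cells by $r_{k,d,m}(N)=N^{d-m}r_{k,m}(N)$, buying the sharper, Szemer\'edi-type bounds of Theorem~\ref{Multi-main0} and its corollaries. Second, your comparison of points of $F$ with cell \emph{centers} (error $\sqrt{d}\,r/2$) is what recovers the constant $\lceil\sqrt{d}/(2\epsilon)\rceil$ in the stated theorem; the paper's Theorem~\ref{Multi-main0} compares at scale $\sqrt{d}\,r$ and so, specialized to $N=k$, would only give the weaker denominator $\log(k\lceil\sqrt{d}/\epsilon\rceil)$. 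So your proposal is not merely a corollary of the in-paper theorem but an honest reconstruction of the cited result, obtained by the same method with the two specializations above.
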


We now define
\begin{align}\label{DefD}
D_\mathrm{A}(k,\epsilon, d,m) &= \sup\{\Assouad F
\colon F \subseteq \mathbb{R}^d, \text{ $F$ does not contain any $(k,\epsilon,\mathbf{v})$-APs}\\\nonumber
&\hspace{50pt} \text{for some a set of orthogonal unit vectors $\{v_1,\ldots,v_m\}$}\},
\end{align}
and
\[
D_\mathrm{A}(k,\epsilon)=D_\mathrm{A}(k,\epsilon,1,1).
\]
We also define $D_\mathrm{H}(k,\epsilon,d,m)$ by replacing $\dim_{\mathrm{A}}$ and $F\subseteq \mathbb{R}^d$ in (\ref{DefD}), to $\dim_{\mathrm{H}}$ and the condition that $F\subset\mathbb{R}^d$ is compact. Here $\dim_{\mathrm{H}} F$ denotes the Hausdorff dimension of $F$. By Theorem~\ref{FraserSYu}, we obtain the upper bound for $D_\mathrm{A}(k,\epsilon,d,m)$. In particular, when $d=m=1$, Fraser, the author and Yu give lower and upper bounds for $D_\mathrm{A}(k,\epsilon)$ and $D_\mathrm{H}(k,\epsilon)$ in \cite{FraserSaitoYu} as follows:
\begin{equation}\label{BoundsForD}
\frac{\log 2}{\log \frac{2k-2-4\epsilon}{k-2-4 \epsilon}} \leq D_\mathrm{H}(k,\epsilon)\leq D_\mathrm{A}(k,\epsilon)\leq  1+\frac{\log (1-1/k)}{\log k \lceil 1/(2\varepsilon)\rceil}
\end{equation}
for every $k\geq 3$ and $\varepsilon \in (0,1/2)$ with $\epsilon <(k-2)/4$.

The goal of this paper is giving new upper and lower bounds for fractal dimensions of a set which does not contain $(k,\epsilon,\mathbf{v})$-APs for some set of orthogonal unit vectors $\{v_1,\ldots, v_m\}$, in terms of the function $r_{k,m}(N)$. Here $r_{k,m}(N)$ denote the largest cardinality of $A\subseteq \{1,\ldots ,N\}^m$ such that $A$ does not contain any arithmetic patches of size $k$ with orientation $\{e_1,\ldots, e_m\}$, where  $e_i$ denotes the vector in $\mathbb{R}^d$ of which $i$-th coordinate is $1$ and others are $0$. Further, we give the equivalent conditions between multi-dimensional Szemer\'edi's theorem given by Furstenberg and Katznelson \cite{FurstenbergKatznelson} and bounds for $D_\mathrm{A}(k,\epsilon,d,d)$.

\begin{notation} We give the following notations:
\begin{itemize}
\item $\mathbb{N}$ denotes the set of all positive integers;
\item for every $F\subseteq \mathbb{R}^d$, $\dim_{\mathrm{L}} F$ denotes the lower dimension of $F$, $\dim_{\mathrm{P}}F$ denotes the packing dimension of $F$, $\dim_{\mathrm{LB}} F$ denotes the lower box dimension of $F$, and $\dim_{\mathrm{UB}} F$ denotes the upper box dimension of $F$;
\item for every $\mathrm{X}\in \{\mathrm{L,H,P,LB,UB}\}$,
define
\begin{align*}
D_\mathrm{X}(k,\epsilon, d,m) &= \sup\{\dim_{\mathrm{X}} F
\colon F \subset \mathbb{R}^d\text{ is compact} , \text{ $F$ does not contain any}\\\nonumber
&\hspace{10pt} \text{$(k,\epsilon,\mathbf{v})$-APs for some set of orthogonal unit vectors $\{v_1,\ldots,v_m\}$}\},
\end{align*}
and
$
D_\mathrm{X}(k,\epsilon)=D_\mathrm{X}(k,\epsilon,1,1).
$
\item for every $x\in \mathbb{R}$, $\lceil x\rceil$ denotes the minimum integer $n$ such that $x\leq n$, and
$\lfloor x \rfloor$ denotes the maximum integer $n$ such that $x\geq n$;
\item for every finite set $A$, $|A|$ denotes the cardinality of $A$.
\end{itemize}

\end{notation}

\section{Result}

\begin{theorem}\label{Multi-main0}
Fix integers $k\geq 2$, $d\geq 1$ and $1\leq m\leq d$, and fix a real number $\epsilon\in (0,1/2)$. If $F\subseteq \mathbb{R}$ does not contain any $(k,\epsilon,\mathbf{v})$-APs for some set of orthogonal unit vectors $\mathbf{v}=\{v_1, \ldots, v_m \}$, then we have
\[
\Assouad F \leq \inf_{N\in \mathbb{N}} \frac{\log(\lceil \sqrt{d}/\epsilon \rceil^d N^{d-m} r_{k,m}(N) ) }{\log (N \lceil \sqrt{d}/\epsilon\rceil)}.
\]
In particular, if we substitute $N=\lceil\sqrt{d}/\epsilon \rceil$, then
\begin{equation}\label{f2-main0}
\Assouad F \leq d+\frac{1}{2} \frac{\log(r_{k,m}(\lceil \sqrt{d}/\epsilon \rceil) /\lceil\sqrt{d}/\epsilon \rceil^m )}{\log\lceil \sqrt{d}/\epsilon \rceil}.
\end{equation}
\end{theorem}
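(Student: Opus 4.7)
The plan is to bound $\Assouad F$ via the standard covering-number definition: for every $x \in F$ and $R > 0$, I will cover $B(x,R) \cap F$ by at most order $M^d N^{d-m} r_{k,m}(N)$ cubes of side $\sim R/(NM)$, where $M := \lceil \sqrt{d}/\epsilon \rceil$ and $N \in \mathbb{N}$ is arbitrary. The standard Assouad-dimension iteration (which promotes a single-scale covering estimate to all scales while absorbing multiplicative constants into the asymptotic exponent) then gives $\Assouad F \leq \log(M^d N^{d-m} r_{k,m}(N))/\log(NM)$; taking the infimum over $N$ yields the first bound, and the special case \eqref{f2-main0} follows by substituting $N = M$ and rearranging.

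For the single-scale estimate, first extend the orthogonal unit set $\mathbf{v}=\{v_1,\ldots,v_m\}$ to an orthonormal basis $\{v_1,\ldots,v_d\}$ of $\mathbb{R}^d$, and cover $B(x,R)$ by $v_i$-aligned cubes of side $s = R/(NM)$, indexed by $\{1,\ldots,L\}^d$ for some $L = O(NM)$. The key geometric observation is that if $k$ cubes with a common last $d-m$ grid-coordinates form an arithmetic patch of gap $\Delta \geq M$ in the first $m$ grid-coordinates and each contain a point of $F$, then an arbitrary choice of one such $F$-point per cube is a $(k,\epsilon,\mathbf{v})$-AP in $F$: the ideal patch with base at the first cube's centre and scale $\Delta s$ oriented by $\mathbf{v}$ places each ideal point at a cube centre, and each chosen $F$-point lies within the cube diameter $\sqrt{d}\,s$ of its centre, which is bounded by $\epsilon\Delta s$ since $\sqrt{d}\,s = \sqrt{d}\,R/(NM) \leq \epsilon R/N \leq \epsilon \Delta s$ whenever $M \geq \sqrt{d}/\epsilon$ and $\Delta \geq M$. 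Since $F$ contains no $(k,\epsilon,\mathbf{v})$-AP, for each fixed choice of the last $d-m$ grid-coordinates, the set $A \subseteq \{1,\ldots,L\}^m$ of occupied first-$m$-coordinates contains no arithmetic patch of size $k$ with gap at least $M$.

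To convert this constraint into a bound involving $r_{k,m}(N)$, I partition $\{1,\ldots,L\}^m$ into $M^m$ arithmetic sub-lattices of common difference $M$, each affinely isomorphic to $\{1,\ldots,N'\}^m$ with $N' = \lceil L/M \rceil = O(N)$. Any size-$k$ arithmetic patch inside such a sub-lattice has gap at least $1$ in the rescaled coordinates, and hence gap at least $M$ in the original $\{1,\ldots,L\}^m$; thus the restriction of $A$ to each sub-lattice contains no size-$k$ arithmetic patch and has cardinality at most $r_{k,m}(N')$. Summing over the $M^m$ sub-lattices gives $|A| \leq M^m r_{k,m}(N')$, and multiplying by the $L^{d-m} = O((NM)^{d-m})$ choices of the last $d-m$ grid-coordinates bounds the total number of occupied cubes by $O(M^d N^{d-m} r_{k,m}(N'))$, as desired.

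I expect the main obstacle to be the careful bookkeeping of the multiplicative constants introduced by boundary effects, in particular the passage from $L = O(NM)$ to $NM$ exactly and from $r_{k,m}(N')$ to $r_{k,m}(N)$. These constants are independent of $x$ and $R$, so they are absorbed when the single-scale estimate is iterated across dyadic scales, preserving the asymptotic exponent $\log(M^d N^{d-m} r_{k,m}(N))/\log(NM)$ as $N$ ranges over $\mathbb{N}$. Taking the infimum over $N$ then completes the proof of the first bound, and direct substitution of $N = M$ into the resulting expression yields the simpler form \eqref{f2-main0}.
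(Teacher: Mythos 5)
Your route is essentially the paper's: cover $B(x,R)$ by a $\mathbf{v}$-aligned grid of small cubes, split the grid into residue-class sublattices of common difference $M=\lceil \sqrt{d}/\epsilon\rceil$ so that any size-$k$ patch of occupied cells inside one sublattice has grid-gap at least $M\geq \sqrt{d}/\epsilon$ and therefore forces a $(k,\epsilon,\mathbf{v})$-AP in $F$ (your inequality $\sqrt{d}\,s\leq \epsilon\Delta s$ is exactly the paper's contradiction step), count slice-by-slice in the last $d-m$ coordinates (this is Lemma~\ref{rkm}, $r_{k,d,m}(N)=N^{d-m}r_{k,m}(N)$, in disguise), iterate over scales, and obtain \eqref{f2-main0} by substituting $N=M$. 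All of that is sound (a small slip: for $m\geq 2$ a patch of size $k$ has $k^m$ cells, not $k$, which does not affect the argument).

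The genuine gap is your handling of the boundary constants. For the Assouad dimension, a multiplicative constant that is independent of $x$, $R$, $r$ but enters the covering count at \emph{every} step of the scale iteration is not absorbed: if each subdivision step yields at most $C\,M^dN^{d-m}r_{k,m}(N)$ occupied cells while the scale shrinks only by the factor $NM$, then after $t\approx \log(R/r)/\log(NM)$ steps the count is $(C K_0)^t=(R/r)^{\sigma_0+\log C/\log(NM)}$, so the exponent is inflated by $\log C/\log(NM)$ for each fixed $N$; the same applies to replacing $r_{k,m}(N)$ by $r_{k,m}(N')$ with $N'\asymp 2N$. This would give a strictly weaker bound than the stated one (and would degrade \eqref{f2-main0} and the corollaries built on it), so "independent of $x$ and $R$, hence absorbed" is not a valid principle here. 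The repair is to make the recursion exact, which is what the paper does: enclose $B(x,R)$ in a cube of side $2R$ once, subdivide cubes by the exact integer factor $NM$ per direction (so $L=NM$ and each mod-$M$ sublattice is exactly an $N\times\cdots\times N$ grid, i.e.\ $N'=N$), and let the only inexactness be the initial ball-to-cube passage and the final mismatch between the $t$-th cube side and $r$; these are one-time effects handled through the relaxed characterization \eqref{AssouadDef2} (the $(1+\alpha)$ slack with $R/r>\lambda$, then $\alpha\to 0$). The paper also first assumes $\sqrt{d}/\epsilon\in\mathbb{Z}$ and then passes to $\epsilon'=\sqrt{d}/\lceil \sqrt{d}/\epsilon\rceil\leq\epsilon$; your direct use of $M\geq \sqrt{d}/\epsilon$ is a fine alternative for that part.
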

We will prove Theorem~\ref{Multi-main0} in Section~\ref{ProofTheorems}. This gives a better upper bound for $D_\mathrm{A}(k,\epsilon,d,m)$ by the multidimensional Szemer\'edi's theorem if $\epsilon$ is sufficiently small. This will be claimed in Corollary~\ref{explicitupper}.

\begin{theorem}\label{Multi-main1}
Fix integers $k\geq 2$, $d\geq 1$ and $1\leq m\leq d$, where $k\geq 3$ when $d=1$. Fix a real number $0<\epsilon<1/8$. Let $N=\lceil 1/(8\epsilon) \rceil$, $0<\delta\leq 1/24 $ and $A$ be a subset of $\{0,1,\ldots, N-1\}^d$ which does not contain any arithmetic patches of size $k$ with orientation $\{e_1,\ldots,e_m\}$. For all $a\in A$ and $x\in \mathbb{R}$, we define
\begin{gather*}
\phi_a (x)=\frac{\delta }{N-1+\delta}\ x +a.
\end{gather*}
Let $F$ be the attractor of the iterated function system $(\phi_a )_{a\in A}$, that is,
\[
F=\bigcup_{a\in A} \phi_a(F).
\]
Then the following hold:
\begin{itemize}
\item[(i)] the iterated function system $\{\phi_a\colon a\in A\}$ satisfies open set condition;
\item[(ii)] $F$ does not contain any $(k,\epsilon, \{e_1, \ldots, e_m\})$-APs;
\item[(iii)] it follows that
\[
\Haus F =\frac{\log |A|}{\log \left(\frac{N-1}{\delta} +1\right )}.
\]
\end{itemize}
\end{theorem}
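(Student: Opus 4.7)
The plan for parts (i) and (iii) is to invoke standard self-similar IFS machinery. For (i), I would take the open set $U = (0, N-1+\delta)^d$: since the common contraction ratio is $r = \delta/(N-1+\delta)$, one has $r(N-1+\delta)=\delta$ and therefore $\phi_a(U) = \prod_{i=1}^d (a_i, a_i+\delta) \subseteq U$ for each $a \in A$ (because $a_i+\delta \leq N-1+\delta$); moreover any two distinct $a,a'\in A\subseteq \mathbb Z^d$ satisfy $\|a-a'\|_\infty\geq 1>\delta$, so the images are pairwise disjoint, yielding OSC. Granted (i), standard Hutchinson/Moran theory for equicontractive IFS gives $\Haus F = s$ with $|A|r^s=1$, hence the formula in (iii).

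The crux is (ii), which I would prove by contradiction via a self-similar ``zoom''. Suppose $F$ contains a $(k,\epsilon,\{e_1,\ldots,e_m\})$-AP witnessed by $y_x\in F$ (for $x\in\{0,\ldots,k-1\}^m$), $t\in\mathbb R^d$ and $\Delta>0$ with $\|y_x-t-\Delta\sum_{i=1}^m x_ie_i\|\leq\epsilon\Delta$. Let $n^*$ be the largest integer such that all the $y_x$ lie in a single level-$n^*$ cube $C = \phi_{a_1}\circ\cdots\circ\phi_{a_{n^*}}([0,N-1+\delta]^d)$; this exists because level-$n$ cubes shrink to points as $n\to\infty$ while the patch has positive diameter. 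Set $\tilde y_x := (y_x-\mathrm{corner}(C))/r^{n^*} \in [0,N-1+\delta]^d$ and $\tilde\Delta:=\Delta/r^{n^*}$. By maximality, each $\tilde y_x$ lies in some level-1 sub-cube $\prod_i[b^{(x)}_i, b^{(x)}_i+\delta]$ with $b^{(x)}\in A$, and the vectors $b^{(x)}$ are not all equal.

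Combining $0\leq \tilde y_{x,i}-b^{(x)}_i\leq \delta$ with $\|\tilde y_x-\tilde t-\tilde\Delta\sum_i x_ie_i\|_\infty\leq \epsilon\tilde\Delta$ gives, for every $x$, $j\in\{1,\ldots,m\}$ and $i\in\{1,\ldots,d\}$,
\[
b^{(x+e_j)}_i-b^{(x)}_i \in [\tilde\Delta\,\delta_{ij}-2\epsilon\tilde\Delta-\delta,\ \tilde\Delta\,\delta_{ij}+2\epsilon\tilde\Delta+\delta],
\]
an interval of width $4\epsilon\tilde\Delta+2\delta$. The main step is to verify that this width is strictly less than $1$: the containment $\tilde y_x\in[0,N-1+\delta]^d$ together with $\tilde y_{x,j}\approx\tilde t_j+\tilde\Delta x_j$ forces $\tilde\Delta\leq (N-1+\delta)/(k-1+2\epsilon)$, and combining with $\epsilon(N-1)\leq 1/8$ (from $N=\lceil 1/(8\epsilon)\rceil$) and $\delta\leq 1/24$ gives $\epsilon\tilde\Delta\leq 25/192$, so $4\epsilon\tilde\Delta+2\delta\leq 116/192 < 1$. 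Each interval then contains a unique integer: for $i\neq j$ this integer is $0$ (the interval is centred near $0$ and contains $0$), and for $i=j$ it is a single integer $\Delta'$, independent of both $j$ and $x$ because the interval around $\tilde\Delta$ does not depend on $j$. Summing differences, $b^{(x)}=b^{(0)}+\Delta'\sum_i x_ie_i$, and the maximality of $n^*$ rules out $\Delta'=0$, so $\Delta'\geq 1$ and $A$ contains a size-$k$ arithmetic patch with orientation $\{e_1,\ldots,e_m\}$, contradicting the hypothesis on $A$.

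The main obstacle is precisely the width estimate $4\epsilon\tilde\Delta+2\delta<1$, which pins down the constants $\lceil 1/(8\epsilon)\rceil$ and $1/24$ in the hypotheses and which requires careful handling of the geometric containment to extract the upper bound on $\tilde\Delta$. The separate hypothesis $k\geq 3$ in the case $d=1$ is a sanity condition: any two distinct real numbers trivially form a $(2,0,\{e_1\})$-AP, so for $d=1,\,k=2$ the condition on $A$ would force $|A|\leq 1$ and the construction would be vacuous.
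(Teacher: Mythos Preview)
Your argument is correct and follows the same route as the paper: the open cube $(0,N-1+\delta)^d$ for (i), Hutchinson for (iii), and for (ii) a contradiction that transfers the approximate patch in $F$ to an exact integer patch in $A$ via the ``width less than one'' estimate. The only organizational difference is that the paper proves the one-step reduction $Q\subseteq\phi_{a_0}(I_0)$ and then iterates, handling separately the sub-case where two of the integer rounds coincide by bootstrapping $\Delta\le\delta/(1-2\epsilon)$ and pulling the remaining points in one by one; your choice of the maximal level $n^*$ together with the coordinatewise tracking of $b^{(x+e_j)}_i-b^{(x)}_i$ collapses that case split, since $\Delta'=0$ is ruled out directly by maximality. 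One small slip: the containment argument actually gives $\tilde\Delta\le(N-1+\delta)/(k-1-2\epsilon)$, not $k-1+2\epsilon$ (the paper has the same typo in one line); with the correct denominator one still gets $4\epsilon\tilde\Delta+2\delta\le 7/9<1$, so the conclusion is unaffected.
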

We will prove Theorem~\ref{Multi-main1} in Section~\ref{ProofTheorems}. This theorem gives a new lower bound for $D_\mathrm{A}(k,\epsilon,d,m)$. Here a set of contractive functions $\{f_1,\ldots, f_n\}$ from $\mathbb{R}^d$ to $\mathbb{R}^d$ is called an iterated function system on $\mathbb{R}^d$. We say that an iterated function system $\{f_1,\ldots,f_n\}$ on $\mathbb{R}^d$ satisfies open set condition if there exists a bounded open set $V\subset\mathbb{R}^d$ such that
\[
	V\supseteq \bigcup_{i=1}^n f_i(V),
\] 
where the union on the left hand side is pairwise disjoint. The open set condition is useful to calculate the Hausdorff dimension (see \cite{Falconer, Hutchinson} ). We now define
\[
D_{\mathrm{S}}(k,\epsilon,d,m)=\{\dim_{\mathrm{H}}F \colon F\subset\mathbb{R}^d \text{ is compact and satisfies (i) and (ii) in Theorem~\ref{Multi-main1}}  \}.
\]
for every $k\geq 2$, $0<\epsilon<1/2$ and $1\leq m\leq d$. Note that for every bounded set $F\subseteq \mathbb{R}^d$, we have
\begin{equation}\label{FractalDimsInequalities}
\dim_{\mathrm{L}}F \leq \dim_{\mathrm{H}}F \leq \dim_{\mathrm{P}} F\leq \dim_{\mathrm{LB}}F \leq \dim_{\mathrm{UB}} F \leq \dim_{\mathrm{A}}F.
\end{equation}
Further, by Fraser's result \cite{Fraser}, if $F$ satisfies (i) in Theorem~\ref{Multi-main1}, then we have
\begin{equation}\label{FraserFormula}
\dim_{\mathrm{L}}F = \dim_{\mathrm{H}}F = \dim_{\mathrm{P}} F= \dim_{\mathrm{LB}}F = \dim_{\mathrm{UB}} F = \dim_{\mathrm{A}}F.
\end{equation}
Therefore we can replace $\dim_{\mathrm{H}}$ in the definition of $D_{\mathrm{S}}(k,\epsilon, d,m)$ by $\dim_{\mathrm{X}}$ for all $X\in \{\mathrm{L,P,LB,UB,A}\}$. We refer \cite{Falconer,Fraser, Robinson} to the readers who are interested in more details on fractal dimensions.

\begin{corollary}\label{maincoro3}For every $d\geq 1$ and $k\geq 2$ where $k\geq 3$ when $d=1$, and for every $0<\epsilon <1/8$, one has
\begin{align*}
d\left(1-\frac{\log 32}{\log(4/\epsilon)}\right)&+\frac{\log( r_{k,m}(\lceil 1/(8\epsilon)\rceil)/\lceil 1/(8\epsilon)\rceil^{m} ) }{\log(4/\epsilon) } \leq D_\mathrm{S}(k,\epsilon,d,m)\\
&\leq D_\mathrm{A}(k,\epsilon, d, m)
\leq d+\frac{1}{2}\frac{\log(r_{k,m}(\lceil \sqrt{d}/\epsilon\rceil)/\lceil\sqrt{d}/\epsilon \rceil^m) }{\log\lceil\sqrt{d}/\epsilon \rceil}.
\end{align*}
\end{corollary}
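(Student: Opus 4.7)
The statement is a three-term chain and I would prove each inequality in turn. The rightmost inequality is an immediate consequence of Theorem~\ref{Multi-main0}: its bound~(\ref{f2-main0}) applies to every $F\subseteq\mathbb{R}^d$ admissible in the definition of $D_\mathrm{A}(k,\epsilon,d,m)$, so taking the supremum over such $F$ yields the stated upper bound. The middle inequality $D_\mathrm{S}\le D_\mathrm{A}$ follows from the definitions together with the equality~(\ref{FraserFormula}): any compact $F$ satisfying conditions (i) and (ii) of Theorem~\ref{Multi-main1} is admissible for $D_\mathrm{A}(k,\epsilon,d,m)$ (take $\mathbf{v}=\{e_1,\dots,e_m\}$), and the open set condition forces $\Haus F=\Assouad F$, hence $\Haus F\le D_\mathrm{A}(k,\epsilon,d,m)$.

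The substantive claim is the leftmost inequality, which I would establish by constructing one admissible $F$ whose Hausdorff dimension realises the bound. Set $N=\lceil 1/(8\epsilon)\rceil$ and $\delta=1/24$. Take $A_0\subseteq\{0,1,\dots,N-1\}^m$ of cardinality $r_{k,m}(N)$ containing no size-$k$ arithmetic patch with orientation $\{e_1,\dots,e_m\}$, and form the product $A=A_0\times\{0,1,\dots,N-1\}^{d-m}\subseteq\{0,1,\dots,N-1\}^d$. Because an arithmetic patch with orientation $\{e_1,\dots,e_m\}$ keeps the last $d-m$ coordinates fixed, any forbidden patch in $A$ would project onto one in $A_0$; hence $A$ is admissible for Theorem~\ref{Multi-main1} and $|A|=N^{d-m}r_{k,m}(N)$. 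The theorem then produces a compact attractor $F\subseteq\mathbb{R}^d$ satisfying (i) and (ii) with
\[
\Haus F=\frac{\log|A|}{\log((N-1)/\delta+1)}=\frac{(d-m)\log N+\log r_{k,m}(N)}{\log(24N-23)}.
\]

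The rest is arithmetic. From $N\le 1/(8\epsilon)+1$ and $\epsilon<1/8$ one obtains $24N-23\le 3/\epsilon+1\le 4/\epsilon$, so the denominator is at most $\log(4/\epsilon)$ and therefore
\[
\Haus F\ge\frac{(d-m)\log N+\log r_{k,m}(N)}{\log(4/\epsilon)}.
\]
On the other hand, using the identity $d-d\log 32/\log(4/\epsilon)=d\log(1/(8\epsilon))/\log(4/\epsilon)$, the claimed lower bound can be rewritten as
\[
\frac{d\log(1/(8\epsilon))-m\log N+\log r_{k,m}(N)}{\log(4/\epsilon)}.
\]
Comparing numerators, the desired inequality collapses to $d\log(1/(8\epsilon))\le d\log N$, which is exactly $N\ge 1/(8\epsilon)$, the defining property of the ceiling.

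I do not anticipate a serious obstacle. The only design choice is the value of $\delta$, which must simultaneously satisfy $\delta\le 1/24$ (to invoke Theorem~\ref{Multi-main1}) and be large enough that $(N-1)/\delta+1\le 4/\epsilon$; the single value $\delta=1/24$ meets both constraints uniformly over $\epsilon\in(0,1/8)$ and all admissible $d,m$. The only piece of bookkeeping worth highlighting is the product construction of $A$, which reduces the general $(d,m)$ case to the definition of $r_{k,m}$ on cube cross-sections.
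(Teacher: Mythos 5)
Your proposal is correct and follows essentially the same route as the paper: the upper bound from (\ref{f2-main0}), the middle inequality from (\ref{FraserFormula}), and the lower bound by feeding an extremal configuration into Theorem~\ref{Multi-main1} with $N=\lceil 1/(8\epsilon)\rceil$, $\delta=1/24$, and then the same elementary estimates $24N-23\leq 4/\epsilon$ and $N\geq 1/(8\epsilon)$. Your explicit product set $A=A_0\times\{0,\dots,N-1\}^{d-m}$ is just the easy direction of the paper's Lemma~\ref{rkm} ($r_{k,d,m}(N)=N^{d-m}r_{k,m}(N)$), so the arguments coincide in substance.
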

We will prove Corollary~\ref{maincoro3} in Section~\ref{ProofCoro} by combining Theorem~\ref{Multi-main0} and Theorem~\ref{Multi-main1}. Recently, in \cite{FraserShmerkinYavicoli}, Fraser, Shmerkin and Yavicoli define
\[
d(k,\epsilon)=\sup\{\dim_{\mathrm{H}}F \colon F\subset\mathbb{R} \text{ is a bounded set which does not contain $(k,\epsilon,\{1\})$-APs} \}.
\]
They prove that 
\begin{align*}
d(k,\epsilon) &=\sup\{\dim_{\mathrm{H}}F \colon F\subset\mathbb{R} \text{ does not contain $(k,\epsilon,\{1\})$-APs} \}\\
	&= \sup\{ \dim_{\mathrm{A}}F \colon F\subset\mathbb{R} \text{ is a bounded set which does not contain $(k,\epsilon,\{1\})$-APs} \}\}.
\end{align*} 
Therefore $D_{\mathrm{H}}(k,\epsilon)=d(k,\epsilon)\leq D_\mathrm{A}(k,\epsilon)$. Further, they give upper and lower bounds for $d(k,\epsilon)$ as follows:
\begin{equation}\label{FShY}
\frac{\log r_{k,1}(\lfloor 1/(10\epsilon)\rfloor)}{\log(10\lfloor 1/(10\epsilon) \rfloor)  } \leq d(k,\epsilon)\leq \frac{1}{2}\left(\frac{\log (r_{k,1}(\lceil1/\epsilon \rceil)+1)}{\log\lceil1/\epsilon\rceil}+\frac{1}{2}\right).  
\end{equation}

These bounds are almost same as the bounds in Corollary~\ref{maincoro3} with $d=m=1$. 

\begin{corollary} Fix any $0<\delta<1$. For every $k\geq 3$, $D_{\mathrm{A}}(k,\epsilon)$ is less than or equal to
\begin{equation}\label{explicitupper}
1-\frac{c_k( \mathrm{L}_3(\epsilon) -\mathrm{L}_2( \epsilon )^{-\delta}) }{(1+\exp(-\mathrm{L}_2( \epsilon )^{1-\delta}))\mathrm{L}_1(\epsilon) +\exp(-\exp(\mathrm{L}_2(\epsilon)(1-\mathrm{L}_2(\epsilon)^{-\delta} )  ) )  }
\end{equation}
for all $0<\epsilon <\epsilon(\delta)$, where we define $\mathrm{L}_1(\epsilon)=\log\lceil 1/\epsilon \rceil,\  \mathrm{L}_n(\epsilon)=\log \mathrm{L}_{n-1}(\epsilon)$ for every $n\geq 2$.
\end{corollary}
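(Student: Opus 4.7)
The plan is to apply Theorem~\ref{Multi-main0} with $d = m = 1$ and then optimize the free parameter $N \in \mathbb{N}$ using Gowers' quantitative form of Szemer\'edi's theorem. Writing $M = \lceil 1/\epsilon \rceil$, Theorem~\ref{Multi-main0} gives, for each $N \in \mathbb{N}$,
\[
D_{\mathrm{A}}(k,\epsilon) \leq 1 - \frac{\log N - \log r_{k,1}(N)}{\log(MN)}.
\]
Gowers' theorem supplies an inequality of the form $r_{k,1}(N) \leq C(k) N(\log\log N)^{-\gamma_k}$, valid for all $N$ large enough in terms of $k$, where $\gamma_k > 0$ depends only on $k$; equivalently, $\log N - \log r_{k,1}(N) \geq \gamma_k \log\log\log N - \log C(k)$.

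My next step would be to choose $N = \lceil \exp(\exp(\mathrm{L}_2(\epsilon)(1-\mathrm{L}_2(\epsilon)^{-\delta}))) \rceil$. This choice is dictated by the denominator appearing in \eqref{explicitupper}: using the identity $\exp(\mathrm{L}_2) = \mathrm{L}_1$ together with the elementary estimate $\log\lceil X \rceil \leq \log X + 1/X$ (valid for $X \geq 1$), one obtains
\[
\log(MN) \leq (1 + \exp(-\mathrm{L}_2^{1-\delta}))\mathrm{L}_1 + \exp(-\exp(\mathrm{L}_2(1-\mathrm{L}_2^{-\delta}))),
\]
which is precisely the denominator in \eqref{explicitupper}. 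At the same time, $\log\log N \geq \mathrm{L}_2(1-\mathrm{L}_2^{-\delta})$, so $\log\log\log N \geq \mathrm{L}_3 + \log(1-\mathrm{L}_2^{-\delta})$.

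The remaining step---and this is where care is required---is to produce an inequality of the form
\[
\gamma_k \log\log\log N - \log C(k) \geq c_k(\mathrm{L}_3(\epsilon) - \mathrm{L}_2(\epsilon)^{-\delta})
\]
for some positive constant $c_k$ depending only on $k$, valid for all $\epsilon < \epsilon(\delta)$. Since $\log(1-y) < -y$ for every $y > 0$, one cannot take $c_k = \gamma_k$: the naive substitution has a second-order deficit. The remedy is to take $c_k \in (0, \gamma_k)$ \emph{strictly}; then the surplus $(\gamma_k - c_k)\mathrm{L}_3(\epsilon)$ diverges as $\epsilon \to 0$ and, for $\epsilon$ below a threshold $\epsilon(\delta)$ (depending also on $k$), absorbs both the additive constant $\log C(k)$ and the Taylor remainder $|\log(1-\mathrm{L}_2^{-\delta}) + \mathrm{L}_2^{-\delta}| = O(\mathrm{L}_2^{-2\delta})$. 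Substituting these two estimates into the bound from Theorem~\ref{Multi-main0} then yields \eqref{explicitupper}.
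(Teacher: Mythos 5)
Your proposal is correct and follows essentially the same route as the paper: the same application of Theorem~\ref{Multi-main0} with $d=m=1$, the same Gowers input $r_{k,1}(N)\le N(\log\log N)^{-\gamma_k}$, and even the same scale, since your choice $N=\bigl\lceil \exp\bigl(\exp(\mathrm{L}_2(\epsilon)(1-\mathrm{L}_2(\epsilon)^{-\delta}))\bigr)\bigr\rceil$ coincides with the paper's $N=\lceil\lceil 1/\epsilon\rceil^{r}\rceil$ with $r=\exp(-\mathrm{L}_2(\epsilon)^{1-\delta})$, and your estimates for $\log(MN)$ and $\log\log\log N$ reproduce the paper's displayed chain. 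The only divergence is the endgame: the paper keeps $c_k$ equal to Gowers' exponent and silently replaces $\log(\mathrm{L}_2-\mathrm{L}_2^{1-\delta})=\mathrm{L}_3+\log(1-\mathrm{L}_2^{-\delta})$ by the larger quantity $\mathrm{L}_3-\mathrm{L}_2^{-\delta}$ (a step that, as you note, goes the wrong way by $O(\mathrm{L}_2^{-2\delta})$), whereas you repair this by taking $c_k$ strictly smaller than the Gowers exponent so that the surplus $(\gamma_k-c_k)\mathrm{L}_3$ absorbs the deficit and the constant $C(k)$ --- legitimate for the statement as written, since it does not pin $c_k$ down, and in fact more careful than the paper's own final step.
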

We will prove Corollary~\ref{explicitupper} in Section~\ref{ProofCoro}. The first term in the numerator of the complicated fraction in (\ref{explicitupper})
dominates the second, and also the first term of the denominator dominates the second. Hence the right hand side of (\ref{explicitupper}) is near to
\[
1-\frac{c_k \mathrm{L}_3(\epsilon)}{(1+\exp(-\mathrm{L}_2(\epsilon)^{1-\delta}) )\mathrm{L}_1(\epsilon) }
\]
Therefore we obtain better upper bounds if $\delta>0$ is smaller. The upper bound (\ref{explicitupper}) comes from Gowers' upper bound for $r_{k,1}(N)$ \cite{Gowers} as follows:
\begin{equation}\label{GowersBound}
r_{k,1}(N) \leq \frac{N}{(\log\log N)^{c_k}}
\end{equation}
for every $N\geq 3$ and $k\geq 3$, where $c_k=2^{-2^{k+9}}$. In order to simplify, substitute $\delta =1/2$ in (\ref{explicitupper}) and one obtains that
\[
D_{\mathrm{A}}(k,\epsilon)\leq 1-(1+o(1))\frac{c_k \log\log\log\lceil1/\epsilon \rceil }{\log \lceil 1/\epsilon \rceil}
\]
as $\epsilon \rightarrow +0$. This upper bound is better than (\ref{BoundsForD}) if $0<\epsilon<\epsilon(k) $ is sufficiently small.

\begin{corollary}
For every $k\geq 3$ and $0<\epsilon<1/8$, $D_{\mathrm{S}}(k,\epsilon)$ is greater than or equal to
\[
1-\frac{1}{ \log(\frac{4}{\epsilon})} \left(\log (32C) + (\log 2) \left(n2^{(n-1)/2} \sqrt[n]{\log_2 \lceil 1/(8\epsilon)\rceil} +\frac{1}{2n}\log_2\log_2 \lceil 1/(8\epsilon)\rceil \right) \right),
\]
for some absolute constant $C>0$, where $n=\lceil \log_2 k \rceil$.
\end{corollary}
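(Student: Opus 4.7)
The plan is to specialize Corollary~\ref{maincoro3} to the one-dimensional case $d=m=1$ and then feed in a Behrend-type lower bound for $r_{k,1}(N)$. Writing $N=\lceil 1/(8\epsilon)\rceil$, the lower bound in Corollary~\ref{maincoro3} becomes
\[
D_{\mathrm{S}}(k,\epsilon) \;\geq\; 1 - \frac{\log 32}{\log(4/\epsilon)} + \frac{\log(r_{k,1}(N)/N)}{\log(4/\epsilon)} \;=\; 1 - \frac{\log\!\bigl(32N/r_{k,1}(N)\bigr)}{\log(4/\epsilon)},
\]
so the whole corollary reduces to an explicit upper bound on $N/r_{k,1}(N)$.

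For that upper bound I would invoke the quantitative Behrend--Rankin lower bound for $r_{k,1}(N)$, in the form refined by Elkin (for $k=3$) and O'Bryant (for general $k$): there is an absolute constant $C>0$ such that for every $k\geq 3$ and every integer $N\geq 2$, setting $n=\lceil \log_2 k\rceil$,
\[
r_{k,1}(N) \;\geq\; \frac{N}{C\,(\log_2 N)^{1/(2n)}\,2^{\,n\,2^{(n-1)/2}\sqrt[n]{\log_2 N}}}.
\]
Taking logarithms of $32N/r_{k,1}(N)$ and using the identity $\log((\log_2 N)^{1/(2n)})=\tfrac{\log 2}{2n}\log_2\log_2 N$ yields
\[
\log\!\left(\frac{32N}{r_{k,1}(N)}\right) \;\leq\; \log(32C) + (\log 2)\!\left(n\,2^{(n-1)/2}\sqrt[n]{\log_2 N} + \tfrac{1}{2n}\log_2\log_2 N\right),
\]
and dividing both sides by $\log(4/\epsilon)$ reproduces exactly the inequality claimed by the corollary.

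The only delicate point is arranging the Behrend-type bound to hold with an \emph{absolute} constant $C$, i.e.\ one independent of $k$; this is built into the standard Rankin/O'Bryant construction, where all $k$-dependence is carried by the parameter $n=\lceil \log_2 k\rceil$ and absorbed into the double-exponential factor $2^{\,n\,2^{(n-1)/2}\sqrt[n]{\log_2 N}}$. Any absolute multiplicative slack between the constant in the cited Behrend-type theorem and the $C$ in the corollary can be merged into a single absolute $C$, which is all the statement requires. Beyond isolating this classical bound in the form displayed above, the whole proof is a single algebraic substitution into Corollary~\ref{maincoro3}, so I expect no substantial obstacle.
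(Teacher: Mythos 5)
Your proposal is correct and takes essentially the same route as the paper: specialize Corollary~\ref{maincoro3} to $d=m=1$ with $N=\lceil 1/(8\epsilon)\rceil$ and substitute O'Bryant's lower bound for $r_{k,1}(N)$, which is exactly how the (omitted) proof is meant to go. One minor remark: the Behrend-type bound you quote puts the factor $(\log_2 N)^{1/(2n)}$ on the unfavorable side, whereas O'Bryant's bound actually reads $r_{k,1}(N)\geq C\,N\,2^{-n2^{(n-1)/2}\sqrt[n]{\log_2 N}}(\log_2 N)^{1/(2n)}$ with that factor as a gain; since $(\log_2 N)^{1/(2n)}\geq 1$ for $N\geq 2$, your weaker form follows from it and reproduces exactly the stated inequality (the sharper form would even give the bound with $-\tfrac{1}{2n}\log_2\log_2\lceil 1/(8\epsilon)\rceil$, which implies the one claimed).
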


This result immediately comes from Corollary~\ref{maincoro3} with $d=m=1$ and O'Bryant's lower bound for $r_{k,1}(N)$ \cite{OBryant}, which is
\[
r_{k,1}(N) \geq C N\exp \left((\log 2) \left(-n2^{(n-1)/2} \sqrt[n]{\log_2 N} +\frac{1}{2n}\log_2\log_2 N \right) \right)
\]
for all $N\geq 1$ and $k\geq 3$, for some $C>0$. Hence we omit the proof. In order to simplify, for any fixed $k\geq 3$ and for every $0<\epsilon <\epsilon(k)$ we have
\[
D_{\mathrm{S}}(k,\epsilon) \geq    1 - A_k\frac{\sqrt[n]{\log \lceil 1/8\epsilon \rceil }}{\log (1/\epsilon)}
\]
for some constant $A_k>0$ depending on only $k$. This lower bound is better than (\ref{BoundsForD}) if $0<\epsilon<\epsilon(k)$ is sufficiently small.

\begin{corollary}\label{mainineq}
For every $d\geq 1$, there exist positive constants $A_d$ and $B_d$ such that for every integer $k\geq 3$ and real number $0<\epsilon <1/8$, one has
\[
A_d \frac{r_{k,d}(\lceil 1/8\epsilon \rceil)}{\lceil 1/8\epsilon \rceil^d}\leq  \epsilon^{d-D_\mathrm{S}(k,\epsilon,d,d)}\leq \epsilon^{d-D_\mathrm{H}(k,\epsilon,d,d)}\leq \epsilon^{d-D_{\mathrm{A}}(k,\epsilon,d,d)}
\leq B_d \left(\frac{r_{k,d}(\lceil\sqrt{d}/\epsilon \rceil)}{\lceil\sqrt{d}/\epsilon \rceil^d}\right)^{1/2}.
\]
\end{corollary}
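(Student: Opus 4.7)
The plan is to derive everything from Corollary~\ref{maincoro3} with $m=d$, using only elementary manipulations of logarithms together with the standard chain $D_{\mathrm{S}}\leq D_{\mathrm{H}}\leq D_{\mathrm{A}}$. Since $0<\epsilon<1/8<1$, the map $x\mapsto\epsilon^x$ is strictly decreasing, so the two middle inequalities in the statement reduce to $D_{\mathrm{S}}(k,\epsilon,d,d)\leq D_{\mathrm{H}}(k,\epsilon,d,d)\leq D_{\mathrm{A}}(k,\epsilon,d,d)$. The right one is simply the general inequality $\dim_{\mathrm{H}}F\leq \dim_{\mathrm{A}}F$ recorded in (\ref{FractalDimsInequalities}). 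For the left one, I would note that any $F$ competing in the supremum defining $D_{\mathrm{S}}$ is a compact IFS attractor satisfying the open set condition and also satisfying condition~(ii) of Theorem~\ref{Multi-main1}; taking $\mathbf{v}=\{e_1,\dots,e_d\}$ in the defining condition for $D_{\mathrm{H}}(k,\epsilon,d,d)$ shows that such an $F$ also competes in that supremum.

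For the right-most inequality I would set $N=\lceil\sqrt{d}/\epsilon\rceil$ and start from the upper bound of Corollary~\ref{maincoro3} with $m=d$, namely
\[
D_{\mathrm{A}}(k,\epsilon,d,d)-d\leq \tfrac{1}{2}\,\frac{\log(r_{k,d}(N)/N^d)}{\log N}.
\]
Exponentiating with base $N$ rearranges this to $N^{-(d-D_{\mathrm{A}})}\leq (r_{k,d}(N)/N^d)^{1/2}$. To convert $N^{-1}$ into $\epsilon$ I would use that $\sqrt{d}/\epsilon\geq 1$ under the standing hypotheses, hence $N\leq 2\sqrt{d}/\epsilon$, i.e.\ $\epsilon\leq 2\sqrt{d}/N$; combining this with $d-D_{\mathrm{A}}\in[0,d]$ and monotonicity of $x\mapsto x^{d-D_{\mathrm{A}}}$ yields
\[
\epsilon^{d-D_{\mathrm{A}}}\leq (2\sqrt{d})^{d-D_{\mathrm{A}}}\,N^{-(d-D_{\mathrm{A}})}\leq (2\sqrt{d})^d\,\bigl(r_{k,d}(N)/N^d\bigr)^{1/2},
\]
so I would take $B_d=(2\sqrt{d})^d$.

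For the left-most inequality I would set $M=\lceil 1/(8\epsilon)\rceil$ and collect the two additive terms in the lower bound of Corollary~\ref{maincoro3} into the single expression $D_{\mathrm{S}}(k,\epsilon,d,d)-d\geq \log\!\bigl(r_{k,d}(M)/(32^d M^d)\bigr)/\log(4/\epsilon)$, which is equivalent to $(4/\epsilon)^{d-D_{\mathrm{S}}}\leq (32M)^d/r_{k,d}(M)$. Splitting $(4/\epsilon)^{d-D_{\mathrm{S}}}=4^{d-D_{\mathrm{S}}}\epsilon^{-(d-D_{\mathrm{S}})}$ and using $4^{d-D_{\mathrm{S}}}\geq 1$ (valid since $0\leq d-D_{\mathrm{S}}\leq d$) then gives $\epsilon^{d-D_{\mathrm{S}}}\geq r_{k,d}(M)/(32^d M^d)$, so $A_d=32^{-d}$ works. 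There is no real obstacle in this proof: every step is either a direct quotation of Corollary~\ref{maincoro3} or an elementary monotonicity argument, and the only bookkeeping is to choose the constants $A_d$ and $B_d$ so that they absorb the change of base between $N$, $4/\epsilon$, and $\epsilon$ when exponentiating.
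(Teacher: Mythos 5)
Your proposal is correct and follows exactly the route the paper intends: the paper omits the proof, stating only that the corollary is immediate from Corollary~\ref{maincoro3} with $m=d$, and your argument (the chain $D_{\mathrm{S}}\leq D_{\mathrm{H}}\leq D_{\mathrm{A}}$, monotonicity of $x\mapsto\epsilon^{x}$, and the change-of-base bookkeeping yielding $A_d=32^{-d}$ and $B_d=(2\sqrt{d})^{d}$) supplies precisely the omitted details. I see no gaps.
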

We can immediately show this corollary from Corollary~\ref{maincoro3} with $d=m$. Thus we omit the proof. This corollary gives the following equivalences between the multidimensional Szemer\'edi's theorem given by Furstenberg and Katznelson, and bounds for $D(k,\epsilon,d,d)$:

\begin{corollary}\label{mainequiv}
Fix integers $k\geq 3$ and $d\geq 1$. Let $\mathbf{e}$ be the standard basis of $\mathbb{R}^d$. The following are equivalent:
\begin{itemize}
\item[(i)] If $A\subseteq \mathbb{N}^d$ satisfies
\[
\limsup_{N\rightarrow \infty} \frac{|A\cap [1,N]^d|}{N^d}>0,
\]
then $A$ contains $(k,0,\mathbf{e})$-APs;
\item[(ii)] $r_{k,d}(N)/N\rightarrow 0$ as $N\rightarrow \infty$;
\item[(iii)] $\epsilon^{d-D_{\mathrm{S}}(k,\epsilon,d,d)} \rightarrow 0$ as $\epsilon \rightarrow +0$;
\item[(iv)] $\epsilon^{d-D_{\mathrm{H}}(k,\epsilon,d,d)} \rightarrow 0$ as $\epsilon \rightarrow +0$;
\item[(v)] $\epsilon^{d-D_{\mathrm{A}}(k,\epsilon,d,d)} \rightarrow 0$ as $\epsilon \rightarrow +0$.
\end{itemize}
\end{corollary}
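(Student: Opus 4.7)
The plan is to split the cycle into two independent parts: (i)$\,\Leftrightarrow\,$(ii), which is the classical translation between the density and cardinality formulations of the multi-dimensional Szemer\'edi theorem, and (ii)$\,\Leftrightarrow\,$(iii)$\,\Leftrightarrow\,$(iv)$\,\Leftrightarrow\,$(v), which will fall out immediately from the sandwich of Corollary~\ref{mainineq}. I read the quotient in~(ii) as $r_{k,d}(N)/N^d$, the natural density of patch-free subsets of $[1,N]^d$, since this is what Corollary~\ref{mainineq} controls and what Furstenberg--Katznelson actually produces.

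For (ii)$\,\Rightarrow\,$(i) the argument is direct. If $A\subseteq \mathbb{N}^d$ has positive upper density $\delta>0$, then $|A\cap[1,N]^d|\ge(\delta/2)N^d$ for infinitely many $N$; once (ii) gives $r_{k,d}(N)/N^d<\delta/2$, the pigeonhole inequality $|A\cap[1,N]^d|>r_{k,d}(N)$ forces an arithmetic patch of size $k$ with orientation $\mathbf{e}$ inside $A\cap[1,N]^d$, i.e.\ a $(k,0,\mathbf{e})$-AP of $A$. For the reverse (i)$\,\Rightarrow\,$(ii) I proceed by contradiction: if $r_{k,d}(N_j)/N_j^d\ge\delta$ for some $N_j\to\infty$, choose extremal patch-free sets $A_j\subseteq[1,N_j]^d$ with $|A_j|\ge\delta N_j^d$ and extract, by a standard compactness argument in $\{0,1\}^{\mathbb{Z}^d}$ (combined with a random-shift trick so that the limit has density $\ge\delta$ at the origin), a configuration $A\subseteq\mathbb{Z}^d$ of positive upper density containing no $(k,0,\mathbf{e})$-AP, contradicting~(i).

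The second half is essentially automatic. The monotonicity $D_{\mathrm{S}}\le D_{\mathrm{H}}\le D_{\mathrm{A}}$ combined with $\epsilon\in(0,1)$ gives
\[
\epsilon^{d-D_{\mathrm{S}}(k,\epsilon,d,d)}\ \le\ \epsilon^{d-D_{\mathrm{H}}(k,\epsilon,d,d)}\ \le\ \epsilon^{d-D_{\mathrm{A}}(k,\epsilon,d,d)},
\]
so (v)$\,\Rightarrow\,$(iv)$\,\Rightarrow\,$(iii) is free. The inequalities of Corollary~\ref{mainineq} then close the cycle: the right-most bound
$\epsilon^{d-D_{\mathrm{A}}(k,\epsilon,d,d)}\le B_d\bigl(r_{k,d}(\lceil\sqrt{d}/\epsilon\rceil)/\lceil\sqrt{d}/\epsilon\rceil^d\bigr)^{1/2}$
yields (ii)$\,\Rightarrow\,$(v), since $\lceil\sqrt{d}/\epsilon\rceil\to\infty$ as $\epsilon\to 0^+$; and the left-most bound
$A_d\,r_{k,d}(\lceil 1/(8\epsilon)\rceil)/\lceil 1/(8\epsilon)\rceil^d\le\epsilon^{d-D_{\mathrm{S}}(k,\epsilon,d,d)}$,
combined with the fact that $\lceil 1/(8\epsilon)\rceil$ traverses every sufficiently large positive integer as $\epsilon$ varies over $(0,1/8)$, gives (iii)$\,\Rightarrow\,$(ii).

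The main obstacle will be the direction (i)$\,\Rightarrow\,$(ii), but only in appearance: it is precisely the classical equivalence between cardinality and density versions of Szemer\'edi-type theorems, and the extraction step is by now standard. The real content of the corollary is therefore carried entirely by Corollary~\ref{mainineq}.
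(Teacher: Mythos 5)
Your overall architecture is the same as the paper's: you handle (ii)$\Leftrightarrow$(iii)$\Leftrightarrow$(iv)$\Leftrightarrow$(v) by the sandwich of Corollary~\ref{mainineq} (the paper does exactly this, only without spelling out the monotonicity and the choice $\epsilon=\epsilon_N$ with $\lceil 1/(8\epsilon_N)\rceil=N$, which you do correctly), you read (ii) as $r_{k,d}(N)/N^d\to 0$ (the intended reading, matching Lemma~\ref{Lemma1}), and your (ii)$\Rightarrow$(i) pigeonhole argument is the same as the paper's. The one place where the paper does real work is the converse (i)$\Rightarrow$(ii), which it isolates as Lemma~\ref{Lemma1} and proves in the Appendix, and this is precisely where your proposal has a genuine gap.

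You dispose of (i)$\Rightarrow$(ii) by ``a standard compactness argument in $\{0,1\}^{\mathbb{Z}^d}$ combined with a random-shift trick so that the limit has density $\ge\delta$ at the origin.'' As stated this does not produce what you need. To contradict (i) you must exhibit a patch-free set $A\subseteq\mathbb{N}^d$ with $\limsup_N |A\cap[1,N]^d|/N^d>0$, i.e.\ positive density along the \emph{anchored} cubes $[1,N]^d$ at infinitely many scales of a \emph{single} set. A shift of the extremal set $A_j$ that is good for one scale $M$ need not be good for larger scales, and a product-topology limit of shifted copies only controls each fixed finite window in the tail of the subsequence; the later configurations (chosen to be dense at large scales) may be sparse on the earlier windows, so the limit set is not guaranteed to have positive upper density at infinitely many anchored scales --- indeed it is not even obviously of positive upper Banach density, and positive Banach density alone would not contradict (i) as formulated. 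The paper avoids this by an explicit construction (Appendix, proof that (iii)$\Rightarrow$(i) of Lemma~\ref{Lemma1}): concatenate the extremal patch-free sets $A_{j_n}$ along the $e_1$-axis, translating the $n$-th block by $t_{n+1}=(M_n+2N_{j_{n+1}})e_1$ and setting $M_{n+1}=M_n+3N_{j_{n+1}}$ with $N_{j_{n+1}}>M_n$. The rapidly growing gaps serve two purposes at once: the newest block occupies a definite proportion of $[1,M_{n+1}]^d$, so the union has anchored upper density at least $\delta/4^d$, and any size-$k$ patch meeting two different blocks would have gap difference $\Delta\ge M_{n}+2N_{j_{n+1}}$, forcing its remaining points outside the set, so the union stays patch-free. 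To complete your proof you need either this concatenation construction or an equivalent device; the compactness-plus-random-shift extraction, without such a nesting/separation mechanism, does not close the argument.
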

We will prove Corollary~\ref{mainequiv} in Section~\ref{ProofCoro}. Furstenberg and Katznelson proved that for any $A\subseteq \mathbb{Z}^d$ satisfying
\begin{equation}\label{posBanachdens}
\lim_{h\rightarrow \infty} \sup\left\{\frac{|A\cap I|}{h^d} \colon I\subset\mathbb{R}^d \text{ is a closed hyper-cube with side length $h$} \right\}>0,
\end{equation}
and for any finite set $F\subset \mathbb{Z}^d$, there exists $a\in \mathbb{Z}^d$ and $\Delta \in \mathbb{Z}$ such that $a+\Delta F \subset A$. This statement is equivalent to (i) in Corollary~\ref{mainequiv} is true for every $d\geq 1$ and $k\geq 3$. Further, $D_{\mathrm{H}}(k,\epsilon, d,d)$ in (iv) in Corollary~\ref{mainequiv} can be replaced other fractal dimensions. Therefore for each $\mathrm{X}\in \{\mathrm{S,L,H,P,LB,UB,A}\}$, the multidimensional Szemer\'edi's theorem is equivalent to
\[
\lim_{\epsilon\rightarrow+0}\epsilon^{d-D_X(k,\epsilon,d,d)}=0
\]
for every $k\geq 3$ and $d\geq 1$.

\section{Proof of Corollaries}\label{ProofCoro}

Let $d$ and $m$ be integers with $1\leq m\leq d$, and let $k\geq 2$ be integer. For every integer $N\geq 1$, we define $r_{k,d,m}(N)$ as the largest cardinality of $A\subseteq \{1,\cdots, N \}^d$ such that $A$ does not contain any arithmetic patches of size $k$ with orientation $\{e_1,\ldots, e_m \}$.

\begin{lemma}\label{rkm} For every $1\leq m\leq d$, $k\geq 2$, and $N\geq 1$, we have
\[
r_{k,d,m}(N)=N^{d-m}r_{k,m}(N).
\]
\end{lemma}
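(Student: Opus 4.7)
The plan is to prove the two inequalities separately by exploiting the fact that an arithmetic patch with orientation $\{e_1,\ldots,e_m\}$ lives in an affine subspace parallel to $\mathrm{span}(e_1,\ldots,e_m)$. Concretely, if $P = \{t + \Delta\sum_{i=1}^m x_i e_i : x_i = 0,\ldots,k-1\}$ is such a patch, then the last $d-m$ coordinates of every point of $P$ equal $(t_{m+1},\ldots,t_d)$, while the first $m$ coordinates range over an arithmetic patch of size $k$ in $\mathbb{R}^m$ with orientation $\{e_1,\ldots,e_m\}$. This single observation drives both directions.

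For the upper bound $r_{k,d,m}(N) \le N^{d-m} r_{k,m}(N)$, I would take any $A \subseteq \{1,\ldots,N\}^d$ that avoids arithmetic patches of size $k$ with orientation $\{e_1,\ldots,e_m\}$, and for each $y \in \{1,\ldots,N\}^{d-m}$ consider the slice
\[
A_y = \{x \in \{1,\ldots,N\}^m : (x,y) \in A\}.
\]
If some $A_y$ contained an arithmetic patch $P' \subseteq \{1,\ldots,N\}^m$ of size $k$ with orientation $\{e_1,\ldots,e_m\}$, then $P' \times \{y\}$ would be a patch inside $A$ with the same orientation (viewed in $\mathbb{R}^d$), a contradiction. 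Hence $|A_y| \le r_{k,m}(N)$ for every $y$, and summing over the $N^{d-m}$ slices yields $|A| \le N^{d-m} r_{k,m}(N)$.

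For the lower bound $r_{k,d,m}(N) \ge N^{d-m} r_{k,m}(N)$, I would pick an extremal $A' \subseteq \{1,\ldots,N\}^m$ of cardinality $r_{k,m}(N)$ avoiding patches of size $k$ with orientation $\{e_1,\ldots,e_m\}$, and set $A = A' \times \{1,\ldots,N\}^{d-m} \subseteq \{1,\ldots,N\}^d$. Then $|A| = N^{d-m} r_{k,m}(N)$. If $A$ contained a patch $P$ with orientation $\{e_1,\ldots,e_m\}$, the initial observation forces the last $d-m$ coordinates of all points in $P$ to coincide, and the projection of $P$ onto the first $m$ coordinates would be an arithmetic patch of size $k$ in $A'$, contradicting the choice of $A'$. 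Combining both bounds gives the claimed equality.

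I do not foresee any serious obstacle; the argument is essentially a clean slice-and-product decomposition. The only point that requires a moment of care is checking that the ``orientation'' is literally preserved under the obvious projection and product maps, which is immediate because $\{e_1,\ldots,e_m\}$ is built from the same coordinate directions in both $\mathbb{R}^m$ and $\mathbb{R}^d$, and the gap difference $\Delta$ is unchanged.
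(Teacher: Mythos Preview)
Your proposal is correct and follows essentially the same slice-and-product argument as the paper: one direction uses the product $A'\times\{1,\ldots,N\}^{d-m}$, the other slices an extremal $A\subseteq\{1,\ldots,N\}^d$ along the last $d-m$ coordinates. In fact your write-up is cleaner, since the paper's version has the two inequality signs swapped by typo.
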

\begin{proof}
Let $A\subseteq \{1,\ldots, N\}^d$ which does not contain $(k,0,\{e_1,\ldots, e_m\})$-APs and $|A|=r_{k,m}(N)$. Define
\[
B=\{(x,y)\in\{1,\ldots,N\}^m \times \{1,\ldots,N\}^{d-m} \colon x\in A, y\in \{1,\ldots,N\}^{d-m}  \}
\]
Then $B$ is a subset of $\{1,\ldots, N\}^d$ and does not contain $(k,0,\mathbf{v})$-APs. Therefore one has
\[
r_{k,d,m}(N)\leq |B| =N^{d-m} r_{k,m}(N).
\]
On the other hand, take any $A\subseteq \{1,\cdots, N\}^d$ which does not contain any $(k,\epsilon, \mathbf{e})$-APs. For every $1\leq j_{m+1},\ldots, j_{d}\leq N $, define
\[
B_{j_{m+1},\cdots,j_{d}}=\{x\in\{1,\ldots, N\}^m \colon (x,j_{m+1},\ldots, j_d)\in A \}.
\]
Then each $B_{j_{m+1},\ldots, j_d}$ does not contain any $(k,\epsilon, \mathbf{e})$-APs. Hence we obtain
\[
N^{d-m}r_{k,m}(N)\leq \sum_{j_{m+1},\ldots, j_{d}} |B_{j_{m+1},\ldots, j_d}| = |A|=r_{k,d,m}(N).
\]
\end{proof}

\begin{proof}[Proof of Corollary~\ref{maincoro3}]
By (\ref{f2-main0}) in Theorem~\ref{Multi-main0}, one has
\[
D_\mathrm{A}(k,\epsilon, d, m)
\leq d+\frac{1}{2}\frac{\log(r_{k,m}(\lceil \sqrt{d}/\epsilon\rceil)/\lceil\sqrt{d}/\epsilon \rceil^m) }{\log\lceil\sqrt{d}/\epsilon \rceil}.
\]
for all $k\geq 2$, $0<\epsilon< 1/8$ and $1\leq m\leq d$. We next find a lower bound for $D_{\mathrm{S}}(k,\epsilon,d,m)$. Let $N=\lceil 1/(8\epsilon) \rceil $. Take $B\subseteq \{1,\ldots, N\}^d$ which does not contain $(k,0,\{e_1,\ldots,e_m\})$-APs and $|B|=r_{k,d,m}(N)$. By Lemma~\ref{rkm} and Theorem~\ref{Multi-main1} with $A=B$ and $\delta=1/24$, one has
\begin{align*}
D_\mathrm{S}(k,\epsilon, d, m)&\geq \frac{\log r_{k,d,m}(\lceil 1/(8\epsilon) \rceil)}{\log (24(\lceil 1/(8\epsilon)\rceil-1)+1) } \geq \frac{\log (r_{k,m}(\lceil 1/(8\epsilon)\rceil)\lceil 1/(8\epsilon)\rceil^{d-m}) }{\log (4/\epsilon)}\\
&\geq d\left(1-\frac{\log 32}{\log(4/\epsilon)}\right)+\frac{\log( r_{k,m}(\lceil 1/(8\epsilon)\rceil)/\lceil 1/(8\epsilon)\rceil^{m} ) }{\log(4/\epsilon) } .
\end{align*}

\end{proof}

\begin{proof}[Proof of Corollary~\ref{explicitupper}]
Fix $0<\delta<1$, $k\geq 3$ and let $0<\epsilon \ll_\delta 1$ be a sufficiently small real number. Choose $N= \lceil \lceil 1/\epsilon \rceil^r \rceil $ where $r(\epsilon)=\exp(-(\log\log\lceil 1/\epsilon\rceil)^{1-\delta} )$ and $\delta=1/2$. By Theorem~\ref{Multi-main0} with  $d=m=1$, we have
\begin{align*}
D_{\mathrm{A}}(k,\epsilon,1)&\leq \frac{\log (r_k(N)\lceil 1/\epsilon\rceil ) }{\log(N\lceil 1/\epsilon \rceil ) }\leq 1-\frac{c_k \log\log (r\log \lceil 1/\epsilon \rceil  ) }{(r+1)\log \lceil1/\epsilon \rceil +\log(1+\lceil 1/\epsilon \rceil^{-r})} \\
& \leq 1- \frac{c_k \log (\mathrm{L}_2(\epsilon) - \mathrm{L}_2(\epsilon)^{1-\delta}  )     }{(1+\exp(-\mathrm{L}_2(\epsilon)^{1-\delta}))\mathrm{L}_1(\epsilon) +\lceil 1/\epsilon \rceil ^{-r}},
\end{align*}

which implies that $D_{\mathrm{A}}(k,\epsilon)$ is less than or equal to
\begin{equation}
1-\frac{c_k( \mathrm{L}_3(\epsilon) -\mathrm{L}_2( \epsilon )^{-\delta}) }{(1+\exp(-\mathrm{L}_2( \epsilon )^{1-\delta}))\mathrm{L}_1(\epsilon) +\exp(-\exp(\mathrm{L}_2(\epsilon)(1-\mathrm{L}_2(\epsilon)^{-\delta} )  ) )  } .
\end{equation}
\end{proof}

\begin{proof}[Proof of Corollary~\ref{mainequiv}]
By Corollary~\ref{mainineq}, (ii)-(v) are equivalent. Thus it suffices to show that (i) and (ii) are equivalent. The following lemma implies this equivalence:

\begin{lemma}\label{Lemma1}
Fix $k\geq 2$ and $d\geq 1$, and let $\mathbf{e}$ be the standard basis on $\mathbb{R}^d$. The following are equivalent:
\begin{itemize}
\item[(i)] $r_{k,d}(N)/N^d \rightarrow 0$ as $N\rightarrow 0$;
\item[(ii)] Any $A\subseteq \mathbb{Z}^d$ with (\ref{posBanachdens})
contains $(k,0,\mathbf{e})$-APs;

\item[(iii)]
If $A\subseteq \mathbb{N}^d$ satisfies
\begin{equation}\label{ap-f6}
\limsup_{N\rightarrow \infty} \frac{|A\cap [1,N]^d|}{N^d}>0,
\end{equation}
then $A$ contains $(k,0,\mathbf{e})$-APs.
\end{itemize}
\end{lemma}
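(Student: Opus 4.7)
The plan is to establish the cyclic implications (i)$\Rightarrow$(ii)$\Rightarrow$(iii)$\Rightarrow$(i); only the last requires a real construction.

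For (i)$\Rightarrow$(ii), suppose $A\subseteq\mathbb{Z}^d$ has positive upper Banach density, so there exist $c>0$ and cubes $I_n$ of side length $h_n\to\infty$ with $|A\cap I_n|\geq c h_n^d$. After translating to the origin, the shifted copy of $A\cap I_n$ is a subset of $\{0,\ldots,h_n-1\}^d$ of cardinality at least $ch_n^d$. By (i), $r_{k,d}(h_n)/h_n^d\to 0$, so $ch_n^d>r_{k,d}(h_n)$ for all sufficiently large $n$; the shifted set therefore contains an arithmetic patch of size $k$ and orientation $\mathbf{e}$, which pulls back to a $(k,0,\mathbf{e})$-AP in $A$. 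The implication (ii)$\Rightarrow$(iii) is immediate, since the cubes $[1,N]^d$ witnessing (\ref{ap-f6}) also witness positive Banach density.

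The main content is (iii)$\Rightarrow$(i), which I will prove by contrapositive. Assume (i) fails, so there exist $c>0$ and integers $N_n\to\infty$ with $r_{k,d}(N_n)\geq c N_n^d$; by thinning I may assume the rapid growth condition $N_n\geq 4\sum_{m<n}N_m$. For each $n$, fix a set $A_n\subseteq\{1,\ldots,N_n\}^d$ of cardinality $r_{k,d}(N_n)$ that contains no arithmetic patch of size $k$ and orientation $\mathbf{e}$. Setting $M_n=\sum_{m<n}N_m$, place the diagonal-shifted copy $A'_n=(M_n,\ldots,M_n)+A_n$ inside the box $B_n=\{M_n+1,\ldots,M_n+N_n\}^d$, and define $A=\bigcup_n A'_n\subseteq\mathbb{N}^d$. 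Two properties must then be checked. First, $A$ has positive upper density: the growth of $N_n$ gives $M_n+N_n\leq 5N_n/4$, so
\[
\frac{|A\cap[1,M_n+N_n]^d|}{(M_n+N_n)^d}\geq \frac{cN_n^d}{(5N_n/4)^d}= c\Bigl(\frac{4}{5}\Bigr)^d>0.
\]
Second, $A$ contains no $(k,0,\mathbf{e})$-AP. The key geometric observation is that the one-dimensional coordinate intervals $[M_n+1,M_n+N_n]$ are pairwise disjoint across different $n$, so any two points of $\bigcup_n B_n$ that share a single coordinate value must lie in the same $B_n$. For any putative arithmetic patch $\{t+\Delta\sum_i x_i e_i:x\in\{0,\ldots,k-1\}^d\}\subseteq A$, two grid points differing in only one index agree in all other $d-1$ coordinates and so sit in a common $B_n$; by connectedness of $\{0,\ldots,k-1\}^d$ under single-coordinate moves, every patch point lies in a single $B_n$, contradicting the choice of $A_n$.

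The main obstacle is exactly this construction: one has to glue together the extremal sets $A_n$ without creating new, cross-copy arithmetic patches. The diagonal placement with one-dimensionally disjoint coordinate ranges handles this in one stroke, since any axis-parallel patch preserves $d-1$ coordinates and is therefore automatically confined to a single cube $B_n$.
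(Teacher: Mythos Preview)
Your cycle (i)$\Rightarrow$(ii)$\Rightarrow$(iii) matches the paper's. For (iii)$\Rightarrow$(i) your diagonal placement and coordinate-sharing argument are correct and quite clean when $d\geq 2$: two patch points differing in a single index share at least one coordinate value, and since the coordinate ranges $[M_n+1,M_n+N_n]$ are pairwise disjoint this forces them into the same $B_n$. This is actually tidier than the paper's construction, which translates the blocks only along $e_1$ with a deliberate gap of width $2N_{j_{n+1}}$ and then argues that any patch crossing the gap would have step $\Delta$ too large to fit.

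There is, however, a genuine gap in your argument when $d=1$. In that case ``agree in the other $d-1$ coordinates'' is vacuous, so the coordinate-sharing step proves nothing, and the construction itself can fail. Since $M_{n+1}=M_n+N_n$, your one-dimensional blocks $[M_n+1,M_n+N_n]$ are \emph{adjacent} intervals with no separation, and cross-block progressions genuinely occur. Concretely, take $k=3$, $N_1=2$, $N_2=8$ (so $N_2\geq 4N_1$), $A_1=\{1,2\}$, $A_2=\{1,2,4,5\}$; both are $3$-AP-free, but $A'_1\cup A'_2=\{1,2\}\cup\{3,4,6,7\}$ contains the progressions $1,2,3$ and $2,4,6$. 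The rapid-growth hypothesis does not prevent this.

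The paper's remedy is exactly the gap: placing the $(n{+}1)$-st block at first coordinate $M_n+2N_{j_{n+1}}$ ensures that any patch meeting both $B_n$ and the new block has step at least $2N_{j_{n+1}}$, too large for its remaining points to stay inside $B_{n+1}$. You can repair your proof either by treating $d=1$ separately with such a gap argument, or by inserting gaps into the diagonal placement (e.g.\ spacing the blocks so that the distance from $B_n$ to $B_{n+1}$ exceeds the diameter of $B_{n+1}$).
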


We prove this lemma in Appendix.
\end{proof}

\section{Proof of main Theorems}\label{ProofTheorems}
For every $x\in \mathbb{R}^d$ and $R>0$, $B(x,R)$ denotes the closed ball with radius $R$ centered at $x\in \mathbb{R}^d$. For every bounded set $E\subset \mathbb{R}^d$ and $r>0$, $N(E,r)$ denotes the smallest cardinality of a family of sets whose diameters are less than or equal to $r$.
The \textit{Assouad dimension} of $F\subseteq \mathbb{R}^d$ is defined by
\begin{align*}
   \Assouad F = \inf \Big\{\;\sigma\geq 0 &\colon \exists C>0\ \forall r>0\ \forall R> r \ \forall x\in F\\
   &\, \hspace{2cm}\  N\Bigl(B(\alpha, R)\cap F,r\Bigr)\leq  C R^\sigma \Big\}.
\end{align*}
By this definition, we obtain that for every $F\subseteq \mathbb{R}^d$
\begin{align}\label{AssouadDef2}
   \Assouad F = \inf \Big\{\;\sigma\geq 0 &\colon \exists C>0 \ \exists\lambda\geq 1\ \forall r>0\ \forall R> \lambda r \ \forall x\in F\\ \nonumber
   &\, \hspace{2cm}\  N\Bigl(B(\alpha, R)\cap F,r\Bigr)\leq  C R^\sigma \Big\}.
\end{align}

\begin{proof}[Proof of Theorem~\ref{Multi-main0}]

Choose any set $F$ which does not contain $(k,\epsilon,\{v_1,\ldots, v_m\})$-APs. By rotating, we may assume that
$v_1=e_1, \ldots, v_m=e_m$. Suppose that $\sqrt{d}/\epsilon $ is an integer. Fix any small real number $\alpha$ and large parameter $\lambda=\lambda(\alpha)$, and fix any $r,R$ with $R/r>\lambda$. Fix a ball $B$ of $\mathbb{R}^d$ with radius $R$ and centered at a point in $F$. Choose a hyper-cube $C\supseteq B$ with side length $2R$. Write
\[
   C=\prod_{i=1}^d [a_i,a_i+2R].
\]
Fix any positive integer $N$. For every $i=1,2,\ldots,d$ and $j=0,1,\ldots, \sqrt{d}N/\epsilon-1$, we define
\begin{gather*}
A^{(i)}_{j}=[a_i+2j R\epsilon/(N\sqrt{d}), a_i +2(j+1) R\epsilon/(N\sqrt{d})].
\end{gather*}
Let $c^{(i)}_j$ be the middle point of $A^{(i)}_j$ for all $i$ and $j$. Let us find a family of hyper-cubes with side length $2R\epsilon/(N\sqrt{d})$ which covers $F\cap B$ and whose cardinality is less than or equal to
\begin{equation}\label{nf1}
\left(\frac{\sqrt{d}}{\epsilon}\right)^d r_{k,d,m}(N).
\end{equation}
Here define
\begin{gather*}
I(j_1,\ldots,j_d)=\{(j_1+\sqrt{d} n_1/\epsilon, \ldots, j_d+\sqrt{d}n_d/\epsilon)\in\mathbb{Z}^d\colon 0\leq n_1,\ldots, n_d\leq N-1 \}
\end{gather*}
for every $1\leq j_1, \ldots, j_d \leq \sqrt{d}/\epsilon$. Note that
\[
\bigcup_{1\leq j_1,\ldots, j_d\leq \sqrt{d}N/\epsilon} I(j_1,\ldots, j_d)=\{1,2,\ldots, \sqrt{d}N/\epsilon\}^d,
\]
which is a disjoint union. Fix any $1\leq j_1, \ldots, j_d \leq \sqrt{d}/\epsilon$. Let
\begin{gather*}
I=I(j_1,\ldots, j_d),\
P=\{(c^{(1)}_{x_1}, \ldots, c^{(d)}_{x_d})\in \mathbb{R}^d\colon (x_1,\ldots, x_d)\in I \},\\
\mathcal{S}=\left\{\prod_{i=1}^d A^{(i)}_{x_i}\colon (x_1,\ldots, x_d)\in I \right\}.
\end{gather*}
Assume that the number of $A\in \mathcal{S} $ such that  $F\cap A\neq \emptyset$ is at least $r_{k,d,m} (N)+1$. Then we can find an arithmetic patch $Q\subseteq P$ of size $k$ and scale $\Delta\geq 2R/N$ with orientation $\{e_1,\ldots, e_m\}$ satisfying that for all $x\in Q$, there exists $y=y(x)\in F$ such that
\[
\|x-y\| \leq 2R\epsilon/N \leq \epsilon \Delta.
\]
Thus $\{y(x)\colon x\in Q\}$ is a $(k,\epsilon,\mathbf{e})$-AP. This is a contradiction. Hence the number of $A\in \mathcal{S}$ such that $F\cap A\neq \emptyset$ is less than or equal to $r_{k,d,m} (N)$ for each fixed $1\leq j_1, \ldots, j_d \leq \sqrt{d}/\epsilon$. Therefore one has (\ref{nf1}). We iterate this argument $t$-times for each smaller hyper-cubes which intersect $F$. Here $t$ is a positive integer which is determined later. Then the number of hyper-cubes with side length $2R(\epsilon/(N\sqrt{d}))^t $ which covers $F$ is less than or equal to $(\sqrt{d}/\epsilon)^{dt} r_{k,d,m}(N)^t$. Let
\[
t=\left\lceil \frac{\log (2R\sqrt{d}/r)}{\log (N\sqrt{d}/\epsilon)} \right\rceil.
\]
Then one has
\[
2R\sqrt{d}\left(\frac{\epsilon}{N\sqrt{d}} \right)^t
\leq  2R\sqrt{d} \left(\frac{\epsilon}{N\sqrt{d}} \right)^{\frac{\log (r/(2R\sqrt{d}))}{\log (\epsilon/(N\sqrt{d}))}}=r.
\]
Therefore we obtain that
\begin{align*}
N(F\cap B, r) &\leq N(F\cap C, r)\leq  \left(
\left(\frac{\sqrt{d}}{\epsilon}\right)^{d} r_{k,d,m}(N)
\right)^t\\
&\leq \left(\frac{2R\sqrt{d}}{r} \right)^{(1+\alpha)\frac{\log((\sqrt{d}/\epsilon)^d r_{k,d,m}(N) ) }{\log (N\sqrt{d}/\epsilon )}}.
\end{align*}
Hence by (\ref{AssouadDef2}), we conclude that
\[
\Assouad F \leq (1+\alpha)\frac{\log((\sqrt{d}/\epsilon)^d r_{k,d,m}(N) ) }{\log (N\sqrt{d}/\epsilon )},
\]
which implies that
\[
\Assouad F \leq \frac{\log((\sqrt{d}/\epsilon)^d r_{k,d,m}(N) ) }{\log (N\sqrt{d}/\epsilon )}
\]
as $\alpha \rightarrow +0$.

If $\sqrt{d}/\epsilon$ is not an integer, then let
\[
\epsilon'= \frac{\sqrt{d}}{\lceil \sqrt{d}/\epsilon \rceil}.
\]
It is seen that $\epsilon' \leq \epsilon$. Therefore $F$ does not contain $(k,\epsilon', \{v_1,\ldots, v_m\})$-APs for some a set of orthogonal unit vectors $\{v_1,\ldots, v_m\}$. Hence one has
\[
\Assouad F \leq \frac{\log(\lceil \sqrt{d}/\epsilon \rceil^d N^{d-m} r_{k,m}(N) ) }{\log (N \lceil \sqrt{d}/\epsilon\rceil)}
\]
by Lemma~\ref{rkm} and the assumption that $\sqrt{d}/\epsilon'$ is an integer.
\end{proof}

\begin{proof}[Proof of Theorem~\ref{Multi-main1}]
Let $A\subseteq \{0,\ldots, N-1\}^d$ be a set which does not contain any arithmetic patches of size $k$ with orientation $\{e_1,\ldots, e_m\}$. Define
\begin{gather*}
   \phi_a(x)=\frac{\delta}{N-1+\delta}x +a\quad (a\in A,\ x\in \mathbb{R}^d),\\
I_0=[0,N-1+\delta]^d,\
I_{n+1}=\bigcup_{a\in A} \phi_a (I_n)\ (n\geq 0),\
F=\bigcap_{n=1}^\infty I_n.
\end{gather*}
Then it follows that $I_n\supseteq I_{n+1} $ for every $n\geq 0$. In fact, for all $(x_1,\ldots, x_d)\in I_0$ and $(a_1,\ldots, a_d)\in A$, one has
\[
0\leq \frac{\delta}{N-1+\delta}x_i +a_i\leq N-1+\delta,
\]
which means that $I_1\subseteq I_0$. If $I_{n+1}\subseteq I_n$ holds for some $n\geq 0$, then we have
\[
I_{n+2}\subseteq \bigcup_{a\in A} \phi_a(I_{n})=I_{n+1}.
\]
The set $F$ is the attractor of $\{\phi_a\colon a\in A\}$ since if $F'$ denotes the attractor of $\{\phi_a\colon a\in A \}$, then by the triangle inequality and the monotonicity of $(I_{n})_{n\geq 0}$, one has
\[
d_{\mathrm{H}}(F,F')\leq d_{\mathrm{H}} (I_n,F)+d_{\mathrm{H}} (I_n,F')\leq d_{\mathrm{H}}(I_n, F)+\left(\frac{\delta}{N-1}\right)^n d_{\mathrm{H}}(I_0, F')\rightarrow +0
\]
as $n\rightarrow \infty$. Here $d_\mathrm{H}(A,B)$ denotes the Hausdorff metric between compact sets $A$ and $B$ of $\mathbb{R}^d$. Therefore $F'=F$. The iterated function system $\{\phi_a\colon a\in A\}$ satisfies open set condition since one has
\[
(0,N-1+\delta)^d \supseteq \bigcup_{a\in A} \phi_a((0,N-1+\delta)^d),
\]
and the union on the right hand side is disjoint.
This yields that
\[
\Haus F =\frac{\log |A|}{\log (\frac{N-1}{\delta}+1)}
\]
by Hutchinson's theorem (alternatively see \cite[Theorem~9.3]{Falconer}). The remaining part is to show that $F$ does not contain $(k,\epsilon,\{e_1,\ldots, e_m\})$-APs. Let $\mathbf{e}=\{e_1,\ldots, e_m\}$. Assume that $F$ contains a $(k,\epsilon, \{e_1,\ldots, e_m\})$-APs. Let $Q$ be such a $(k,\epsilon,\{e_1,\ldots, e_m\})$-APs. It suffices to show that
\begin{equation}\label{reduction1}
Q\subseteq \phi_{a_0}(I_0)
\end{equation}
for some $a_0\in A$. If (\ref{reduction1}) is true, then $\phi_{a_0}^{-1}(Q)\subseteq I_0$ and $\phi^{-1}_{a_0}(Q)$ is also a $(k,\epsilon,\mathbf{e})$-AP. Thus there exists $a_1\in A$ such that
\[
\phi_{a_0}^{-1}(Q)\subseteq \phi_{a_1}(I_0)
\]
which implies that $\phi_{a_1}^{-1}\circ \phi_{a_0}^{-1}(Q)\subseteq I_0$. We iterate this argument $t$-times for any positive integer $t$. Then there exists $a_0,\ldots, a_t\in A$ such that
\begin{equation*}
Q\subseteq \phi_{a_0} \circ \cdots \circ \phi_{a_t} (I_0).
\end{equation*}
The diameter of the right hand side goes to $0$ as $t\rightarrow \infty$. This is a contradiction. Let us show that (\ref{reduction1}). By the definition of $F$, $Q\subseteq I_1$. Hence for all $x\in Q$ there exists $a(x)\in A$ such that
\begin{equation}\label{appro1}
\|a(x)-x\|_\infty \leq \delta.
\end{equation}
Here for every $x=(x_1,\ldots, x_d)\in \mathbb{R}^d$, $\|x\|_2$ denotes the Euclidean norm and $\|x\|_\infty=\max\{|x_i| \colon 1\leq i\leq d\}$.
By definition, there exists $\Delta>0$ and an AP of size $k$ and scale $\Delta$ with orientation $\mathbf{e}$ such that
\begin{equation}\label{appro2}
\inf_{y\in P}\|x-y\|_{\infty}\leq \inf_{y\in P} \|x-y\|_2 \leq \epsilon \Delta
\end{equation}
for all $x\in Q$. Let $y(x)$ be the point $y\in P$ which satisfies (\ref{appro2}) for every $x\in Q$. Here recall that $Q\subseteq I_1 =\bigcup_{a\in A} \phi_a (I_0)$. Fix any $x\in Q$ and choose $x' \in Q$ such that
\[
\|y(x)-y(x')\|_\infty=\Delta
\]
where $x$ and $x'$ are distinct. Then by (\ref{appro1}) and (\ref{appro2}), one has
\begin{align*}
|\|a(x')-a(x)\|_\infty-\Delta| &\leq \|a(x')-y(x')-(a(x)-y(x))\|_\infty\\
&\leq \|a(x')-x'\|_\infty+\|x'-y(x')\|_\infty +\|a(x)-x\|_\infty+\|x-y(x)\|_\infty\\
&\leq 2(\delta +\epsilon \Delta).
\end{align*}
Since $Q\subseteq I_0$, one has
\[
N-1+\delta\geq (k-1)\Delta -2\epsilon \Delta,
\]
which implies that
\[
\Delta \leq \frac{N-1+\delta}{k-1+2\epsilon}.
\]
Since $k\geq 2$, $0<\epsilon <1/8$ and $0<\delta\leq 1/24$,
\begin{align}\label{Delta1}
|\|a(x')-a(x)\|_\infty-\Delta|
&\leq 2\left(\delta+\frac{\lceil 1/(8\epsilon)\rceil-1+\delta}{k-1-2\epsilon}\epsilon\right)\\\nonumber
&\leq 2\left( \delta+\frac{1/(8\epsilon)+\delta}{1-1/4}\epsilon \right)<1/2
\end{align}
Since $a(x'),a(x)\in \mathbb{Z}^d$, $\|a(x')-a(x)\|_\infty \in \mathbb{Z}$. Therefore by (\ref{Delta1}), $\|a(x)-a(x')\|$ is a constant which does not depend on $x$ or $x'$, which implies that $\{a(x)\colon x\in Q\}$ is an AP of size $k$ with orientation $\mathbf{e}$. This is a contradiction. Hence at least two points $x,x'\in Q$ belong to $\phi_a (I_0)$ for some $a\in A$. This yields that
\[
\Delta \leq \delta +2\epsilon \Delta,
\]
which implies that
\begin{equation}\label{evaluateDelta}
\Delta \leq \frac{\delta}{1-2\epsilon}.
\end{equation}
Take $x''\in Q\setminus \{x,x'\}$ such that
\[
\mathrm{dist}(\{x,x'\}, Q\setminus \{x,x'\} )=\mathrm{dist}(\{x,x'\}, \{x''\}),
\]
where $\mathrm{dist} (A,B) =\inf\{\|x-y\|\colon x\in A, y\in B \}$ for every $A,B\subseteq \mathbb{R}^d$. Thus by (\ref{evaluateDelta}), one has
\[
\min\{\|x-x''\|, \|x'-x''\|\}
\leq (1+2\epsilon)\Delta \leq 3\delta <1-\delta.
\]
Therefore $x''$ does not reach to other islands $\phi_{a'}(I_0)$ ($a'\in A\setminus \{a\}$), which means that $x''$ must belong to $\phi_a(I_0)$. By replacing $\{x,x'\}$ to $\{x,x',x''\}$, we can iterate the same argument until the number of $x\in Q$ such that $x\in \phi_a (I_0)$ reaches $|Q|$. Therefore we get $Q\subseteq \phi_a(I_0)$.
\end{proof}

\section{Discrete Analogue}

For every $F\subseteq \mathbb{N}$, define
\[
\mathrm{Dim}_\zeta F =\limsup_{N\rightarrow \infty}\frac{\log |F\cap [1,N]|}{\log N} =\inf\left\{\sigma \geq 0 \colon \sum_{n\in F}n^{-\sigma} <\infty   \right\},
\]
which is introduced by Doty, Gu, Lutz, Mayordomo, and Moser in \cite{DGLMM}, and generalized to a metric space by the author in \cite{Saito}. We can see that
\begin{equation}\label{zetaAssouad}
\mathrm{Dim}_\zeta F \leq \dim_{\mathrm{A}} F
\end{equation}
for all $F\subseteq \mathbb{N}$ by the definition of the Assouad dimension. The author showed the inequality (\ref{zetaAssouad}) more generally in \cite{Saito}. Define
\[
D_\zeta (k,\epsilon)=\sup\{\mathrm{Dim}_\zeta F \colon F\subseteq \mathbb{N} \text{ does not contain any $(k,\epsilon, \{1\})$-APs} \}.
\]
By Theorem~\ref{Multi-main0} with $d=m=1$, one has
\[
D_\zeta (k,\epsilon) \leq D_{\mathrm{A}} (k,\epsilon)\leq \frac{1}{2}\frac{\log(r_k(\lceil 1/\varepsilon \rceil)\lceil 1/\varepsilon \rceil)}{\log(\lceil 1/\varepsilon \rceil)}.
\]
\begin{theorem}\label{discreteMain}
Fix $k\geq 3$ and $\epsilon\in (0,1/16)$. Let $N=\lceil 1/(8\epsilon)\rceil$, $\eta$ be an integer with $\eta\geq 6$ and $A$ be a subset of $\{0,1,\ldots, N-1\}$ with $0\in A$ which does not contain any arithmetic progressions of length $k$.
Define
\begin{gather*}
\psi_a(x) =( \eta +1 )(N-1)x +a\quad  (a\in A, \ x\in \mathbb{Z})\\
B_0=\{0\},\ B_n=\bigcup_{a\in A} \psi_a(B_{n-1})\ (n\geq 1),\ F=\bigcup_{n=0}^\infty B_n.
\end{gather*}
Then the following hold:
\begin{itemize}
\item[(i)]it follows that
\[
F\subseteq \mathbb{N}\cup \{0\} \text{ and }  F=\bigcup_{a\in A} \psi_a(F);
\]
\item[(ii)] $F$ does not contain any $(k,\epsilon,\{1\})$-APs;
\item[(iii)]it follows that
\[
\limsup_{N\rightarrow +\infty} \frac{\log |F\cap [1,N]|}{\log N} \geq \frac{\log\, |A|}{\log ((1+\eta)(N-1))}.
\]
\end{itemize}
\end{theorem}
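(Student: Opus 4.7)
The plan mirrors that of Theorem~\ref{Multi-main1}, replacing the continuous ``islands'' $\phi_a(I_0)$ by the discrete analogues $\psi_a(F)$, whose pointwise labeling is the base-$c$ expansion of $F$ with $c=(\eta+1)(N-1)$. Since $A\subseteq\{0,\ldots,N-1\}$ and $N-1<c$, every $y\in F$ admits a unique expansion $y=\sum_i c^i a_i(y)$ with digits $a_i(y)\in A$, and $y\in\psi_a(F)$ iff $a_0(y)=a$.

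Parts (i) and (iii) are essentially bookkeeping. For (i), induction gives $B_n\subseteq\mathbb{N}\cup\{0\}$, and the chain $(B_n)$ is increasing (base case $B_0=\{0\}\subseteq A=B_1$ uses $0\in A$; inductively, from $B_{n-1}\subseteq B_n$ one obtains $\psi_a(B_{n-1})\subseteq\psi_a(B_n)$ for every $a$, whence $B_n\subseteq B_{n+1}$). Consequently $\bigcup_a\psi_a(F)=\bigcup_n\bigcup_a\psi_a(B_n)=\bigcup_n B_{n+1}=F$. For (iii), the map $(a_0,\ldots,a_{n-1})\mapsto\sum_i c^i a_i$ from $A^n$ to $B_n$ is a bijection (unique base-$c$ expansion with digits less than $c$), so $|B_n|=|A|^n$ and $B_n\subseteq[0,c^n)$. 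Taking $N'=c^n$ yields $|F\cap[1,N']|\geq|A|^n-1$, and letting $n\to\infty$ gives the desired lower bound $\log|A|/\log((\eta+1)(N-1))$.

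The heart of the argument is (ii), which I plan to handle by a single second-difference computation on the highest varying digit. Suppose for contradiction that an AP $(t+j\Delta)_{j=0}^{k-1}$ is $\epsilon\Delta$-shadowed by $y_0,\ldots,y_{k-1}\in F$; since $\epsilon<1/2$ and $\Delta>0$ these are distinct and strictly increasing in $j$. Let $\tilde L$ be the largest index at which $a_i(y_j)$ varies with $j$; this is well-defined because the $y_j$ are distinct. Write
\[
y_j=c^{\tilde L}a_{\tilde L,j}+r_j+C,\qquad 0\leq r_j\leq c^{\tilde L}/\eta,
\]
where $C$ collects the common higher digits and the bound on $r_j$ comes from $(N-1)/(c-1)\leq 1/\eta$. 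The second difference in $j$ annihilates the linear trend $t+j\Delta$ and yields
\[
c^{\tilde L}\,|a_{\tilde L,j+1}-2a_{\tilde L,j}+a_{\tilde L,j-1}|\;\leq\;4\epsilon\Delta+4c^{\tilde L}/\eta.
\]
Since only digits at or below level $\tilde L$ differ, one also has $(k-1-2\epsilon)\Delta\leq|y_{k-1}-y_0|\leq c^{\tilde L}(N-1+1/\eta)$, hence $\Delta/c^{\tilde L}\leq(N-1+1/\eta)/(k-1-2\epsilon)$. Plugging this back and using the hypotheses $N=\lceil 1/(8\epsilon)\rceil$, $\epsilon<1/16$, $k\geq 3$, and $\eta\geq 6$ makes the right-hand side of the displayed bound strictly less than $1$; being an integer, the second difference must vanish. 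Hence $\{a_{\tilde L,j}\}_{j=0}^{k-1}$ is an arithmetic progression in $A$ of length $k$, and by the choice of $\tilde L$ its common difference is nonzero, contradicting the hypothesis on $A$.

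The tightest step I expect to be the strict inequality $4/\eta+4\epsilon\Delta/c^{\tilde L}<1$ in (ii): with $\eta=6$ one has $4/\eta=2/3$, leaving only a $1/3$ margin for the remaining term, which is just secured by the numerical hypotheses $N=\lceil 1/(8\epsilon)\rceil$, $\epsilon<1/16$, and $k\geq 3$ (via $4\epsilon(N-1)\leq 1/2$ and $k-1-2\epsilon\geq 15/8$). Writing out this single estimate carefully, without iteration, is where the argument lives or dies.
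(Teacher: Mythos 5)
Your proposal is correct, and for the heart of the theorem, part (ii), it takes a genuinely different route from the paper. The paper first establishes the self-similar decomposition $B_n=\bigcup_{a\in A}(B_{n-1}+\xi^{n-1}a)$ with $\xi=(\eta+1)(N-1)$ and then iterates the island-localization argument of Theorem~\ref{Multi-main1}: if an approximate progression met two distinct translates, the nearby integer labels would have first differences within $1/2$ of $\Delta$, hence constant, yielding an exact $k$-term progression in $A$; so the progression is confined to a single translate, and descending level by level one reaches $B_0=\{0\}$, a contradiction. You instead argue in one shot: using uniqueness of the base-$c$ expansion ($c=(\eta+1)(N-1)$, digits in $A\subseteq\{0,\dots,N-1\}$ with $N-1<c$), you isolate the highest varying digit $\tilde{L}$, kill the linear trend with a second difference, bound the error by $4\epsilon\Delta+4c^{\tilde{L}}/\eta$, and control $\Delta/c^{\tilde{L}}$ by comparing $|y_{k-1}-y_0|$ with the digit structure; the hypotheses $\eta\ge 6$, $\epsilon<1/16$, $k\ge 3$, $N=\lceil 1/(8\epsilon)\rceil$ (which also give $N\ge 2$, needed for $(N-1)/(c-1)\le 1/\eta$) yield $4/\eta+4\epsilon\Delta/c^{\tilde{L}}\le 2/3+13/45<1$, so the integer second differences vanish and the top digits form a nonconstant $k$-term progression inside $A$ — your numerics check out. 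The paper's route is economical because it reuses Theorem~\ref{Multi-main1} almost verbatim (it literally defers to ``a similar discussion''), whereas yours is self-contained, avoids the downward induction, and concentrates every quantitative hypothesis in a single explicit estimate; likewise your counting in (iii) via $|B_n|=|A|^n$ and $B_n\subseteq[0,c^n)$ replaces the paper's diameter-plus-disjointness bound and is if anything cleaner (both arguments, yours and the paper's, tacitly ignore the degenerate case $|A|=1$, which is harmless).
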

We can find a set $A\subseteq \{0,1,\ldots, N-1\}$ with $0\in A$ and $|A|=r_{k,1}(N)$ since if $A\subseteq \{0,1,\ldots, N-1\}$ does not contain any arithmetic progressions of length $k$, then $(-\min A)+ A $ is a subset of $\{0,1,\ldots, N-1\}$ with $0\in A$ which does not contain arithmetic progression of length $k$. Therefore we have
\begin{equation}
\frac{\log r_k(\lceil 1/(8\epsilon)\rceil)}{\log  (1/\epsilon)} \leq D_\zeta (k,\epsilon) \leq D_\mathrm{A}(k,\epsilon) \leq \inf_{N\in \mathbb{N}} \frac{\log(r_k(N)\lceil 1/\epsilon \rceil)}{\log(N \lceil 1/\epsilon \rceil)}
\end{equation}
for every $k\geq 3$ and $0<\epsilon<1/16$. Hence we get the following discrete analogue of Corollary~\ref{mainequiv}.

\begin{corollary} Fix $k\geq 3$. Any $A\subseteq \mathbb{N}$ with positive upper density contains arithmetic progressions of length $k$ if and only if
\[
\lim_{\epsilon \rightarrow +0} \epsilon^{1-D_\zeta (k,\epsilon)}=0.
\]

\end{corollary}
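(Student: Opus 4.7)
The plan is to reduce the statement to the quantitative equivalence $r_k(N)/N\to 0 \Longleftrightarrow \epsilon^{1-D_\zeta(k,\epsilon)}\to 0$ and then invoke Szemer\'edi's theorem. Specifically, Lemma~\ref{Lemma1} in the case $d=1$ (which is the classical one-dimensional Szemer\'edi theorem) already shows that every $A\subseteq\mathbb{N}$ of positive upper density contains a length-$k$ arithmetic progression if and only if $r_{k,1}(N)/N\to 0$ as $N\to\infty$. Hence it suffices to prove this quantitative equivalence.

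Since $F\subseteq \mathbb{R}$ forces $D_\zeta(k,\epsilon)\leq D_\mathrm{A}(k,\epsilon)\leq 1$, we have $1-D_\zeta(k,\epsilon)\geq 0$ and $\epsilon^{1-D_\zeta(k,\epsilon)}\to 0$ is equivalent to $(1-D_\zeta(k,\epsilon))\log(1/\epsilon)\to +\infty$. I would feed the two-sided inequality
\[
\frac{\log r_k(\lceil 1/(8\epsilon)\rceil)}{\log (1/\epsilon)} \leq D_\zeta(k,\epsilon) \leq \frac{1}{2}\,\frac{\log\bigl(r_k(\lceil 1/\epsilon\rceil)\lceil 1/\epsilon\rceil\bigr)}{\log \lceil 1/\epsilon\rceil}
\]
displayed just before the statement (a consequence of Theorem~\ref{Multi-main0} with $d=m=1$ together with Theorem~\ref{discreteMain}) into this reformulation. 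Rearranging the upper bound gives, with $N=\lceil 1/\epsilon\rceil$,
\[
(1-D_\zeta(k,\epsilon))\log(1/\epsilon) \geq \tfrac{1}{2}\log\bigl(N/r_k(N)\bigr)\cdot \frac{\log(1/\epsilon)}{\log N},
\]
so if $r_k(N)/N\to 0$ the right-hand side blows up, since $\log(1/\epsilon)/\log N\to 1$.

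Conversely, rearranging the lower bound yields, with $M=\lceil 1/(8\epsilon)\rceil$,
\[
(1-D_\zeta(k,\epsilon))\log(1/\epsilon) \leq \log\bigl(M/r_k(M)\bigr) + O(1),
\]
where the $O(1)$ absorbs $\log 8$ and the ceiling error. If the left side tends to $+\infty$ as $\epsilon\to +0$, then choosing $\epsilon_n = 1/(8M_n)$ for an arbitrary integer sequence $M_n\to\infty$ forces $r_k(M_n)/M_n\to 0$, and hence $r_k(N)/N\to 0$ as $N\to\infty$. I do not anticipate any real obstacle: the only book-keeping is that $\log(1/\epsilon)$, $\log\lceil 1/\epsilon\rceil$, and $\log\lceil 1/(8\epsilon)\rceil$ differ by $O(1)$, which does not affect the limits.
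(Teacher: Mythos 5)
Your argument is correct and is essentially the paper's intended (and largely implicit) proof: sandwich $D_\zeta(k,\epsilon)$ between the two $r_k$-bounds displayed just before the corollary (from Theorem~\ref{Multi-main0} with $d=m=1$ and Theorem~\ref{discreteMain}), rearrange the logarithms exactly as you do, and pass between $r_k(N)/N\to 0$ and the positive-upper-density statement via Lemma~\ref{Lemma1} with $d=1$. One small correction of wording only: Lemma~\ref{Lemma1} is an elementary equivalence of statements, not ``the classical Szemer\'edi theorem,'' and your argument neither needs nor should invoke Szemer\'edi's theorem itself (doing so would trivialize the equivalence the corollary is meant to provide).
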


\begin{proof}[Proof of Theorem~\ref{discreteMain}]
We can easily show (i) in Theorem~\ref{discreteMain} by definition and the fact that $B_0\subseteq B_1$. Let us show that (iii). Let $N'=N-1$ and $\xi=(\eta+1)N' $. For all $n\geq 1$, it follows that
\begin{align*}
\mathrm{diam} (B_{n}) &\leq \xi\, \mathrm{diam}(B_{n-1}) +N'\leq \xi^2 \mathrm{diam} B_{n-2} + \xi N'+N'\\
&\leq \cdots \leq (\xi^{n-1}+\xi^{n-2}+\cdots+1)N'\leq (6/5)\xi^{n-1} N'.
\end{align*}
Hence one has
\[
|F\cap[0,(6/5)\xi ^{n-1}N']| \geq |B_n \cap [0,(6/5)\xi^{n-1}N']| \geq |A|^n,
\]
since the union
\[
B_n=\bigcup_{a\in A} \psi_a(B_{n-1})
\]
is disjoint for every $n\geq 1$. Therefore we obtain
\[
\limsup_{N\rightarrow \infty} \frac{\log |F\cap [0,N ]}{\log N}\geq  \frac{\log |A|}{ \log((1+\eta)(N-1)) }
\]
as $N\rightarrow \infty$. Let us next show (ii). It follows that
\begin{equation}\label{dMf1}
B_n=\bigcup_{a\in A} (B_{n-1} + \xi^{n-1}a)
\end{equation}
for all $n\geq 1$. This is clear when $n=1$. Assume that (\ref{dMf1}) holds for some $n\geq 1$. Then we have
\begin{align*}\label{dMf2}
B_{n+1} &=\bigcup_{a\in A} \psi_{a} (B_{n})
=\bigcup_{a\in A} \left(\left(\bigcup_{a'\in A} \xi B_{n-1} +\xi^n a' \right)+ a\right)\\
&= \bigcup_{a'\in A}
\left(\left(\bigcup_{a\in A}\eta_N B_{n-1}+a  \right)+\xi^na'\right)
= \bigcup_{a\in A} (B_{n} + \xi^{n}a).
\end{align*}
Assume that $F$ contains a $(k,\epsilon,\{1\})$-AP. Let $P$ be such a $(k,\epsilon,\{1\})$-AP.  Then we can find $n\geq 0$ such that $P\subseteq B_n$. By a similar discussion of the proof of Theorem~\ref{Multi-main1}, $P\subseteq B_{n-1}+\xi^n a$ for some $a\in A$. Hence $B_{n-1}$ contains a $(k,\epsilon,\{1\})$-AP. By iterating this discussion, we conclude that $B_0$ contains a $(k,\epsilon,\{1\})$-AP. This is a contradiction.
\end{proof}

\section{Further discussion}

\begin{question}\label{ErdosTuran}
Is it ture that 
\[
D_\mathrm{S}(k,\epsilon) \leq 1-\frac{\log(\log\lceil 1/(8\epsilon)\rceil (\log\log\lceil 1/(8\epsilon)\rceil)^2)  }{\log (1/\epsilon)}
\]
for every $k\geq 3$ and $0<\epsilon<\epsilon(k)$?
\end{question}
Erd\H{o}s-Tur\'an conjecture states that a subset of positive integers whose sum of reciplocals diverges would contain arbitrarily long arithmetic progressions. This conjecture is still open even if the length of arithmetic progressions is equal to $3$. By partial summation, if for every $k\geq 3$, there exists $C_k>0$ such that for all $N\geq 2$
\[
r_k(N)\leq C_k \frac{N}{\log N (\log \log N)^2},
\]  
then Erd\H{o}s-Tur\'an conjecture would be ture (see \cite{Gowers2}). Therefore by combining this implication and Corollary~\ref{mainineq} with $d=1$, the affirmative answer to Question~
\ref{ErdosTuran} implies the Erd\H{o}s-Tur\'an conjecture.
\begin{question}\label{Szemeredi}
Can we prove that
\[
\lim_{\epsilon\rightarrow+0} \epsilon^{1-D_\mathrm{X}(k,\epsilon)}=0
\]
for all $k\geq 3$ for some $\mathrm{X}\in \{\zeta, \mathrm{L, H,P,LB,UB,A}\}$, by using fractal geometry?
\end{question}
By Corollary~\ref{mainequiv}, the affirmative answer to Question~\ref{Szemeredi} gives another proof of Szemer\'edi's theorem \cite{Szemeredi}.

\appendix
\section{Proof of Lemma~\ref{Lemma1} }

\begin{proof}
Let us show that (i) implies (ii). Let $\delta$ be the left hand side of (\ref{posBanachdens}). Then there exist infinitely many hyper-cubes $I_1,I_2,\ldots$ such that
$h_1<h_2<\cdots \rightarrow \infty $, and
\[
\frac{|A\cap I_n|}{h_n^d}>\frac{\delta}{2}
\]
for all $n\in \mathbb{N}$, where $h_n$ denotes the side length of $I_n$. For sufficiently large $n$, we have
\begin{equation}\label{ap-f2}
|A\cap I_n|\geq \frac{\delta}{4}\lceil h_n \rceil ^d > \frac{r_{k,d}(\lceil h_n \rceil)}{\lceil h_n \rceil ^d} \lceil h_n \rceil ^d = r(\lceil h_n \rceil).
\end{equation}
Here there exists $t\in \mathbb{Z}^d$ such that
\begin{equation}\label{ap-f5}
t+A\cap I_n \subseteq \{1,\ldots, \lceil h_n \rceil\}^d.
\end{equation}
By combing (\ref{ap-f2}) and (\ref{ap-f5}), $t+A\cap I_n$ contains $(k,0,\mathbf{e})$-APs, which implies that $A$ contains $(k,0,\mathbf{e})$-APs.

It is clear that (ii) implies (iii). Therefore let us prove that (iii) implies (i). This is clear when $d=1$ and $k=2$. Thus we discuss the cases when $d=1$ and $k\geq 3$, or $d\geq 2$ and  $k\geq 2$. Fix $d\geq 1$. Assume that $r_{k,d}(N)/N^d$ does not go to $0$ as $N\rightarrow \infty$. Let us construct a subset of integers which satisfies (\ref{ap-f6}) and does not contain any $(k,0,\mathbf{e})$-APs.
We find a positive real number $\delta$ and an infinite sequence $N_1<N_2<\cdots$ of integers such that
\[
r_{k,d}(N_j)> \delta N_j
\]
for every $j\in \mathbb{N}$. Then for every $j\in \mathbb{N}$, choose $A_j\subseteq \{1,2,\ldots, N_j\}^d$ which does not contain any $(k,0,\mathbf{e})$-APs and $|A_j|=r_{k,d}(N)$. Let $t_1$ be the origin of $\mathbb{R}^d$, $B_1=t_1+A_1$ and $M_1=N_1$. It is clear that $B_1\subset [1, M_1]$. Assume that we have an increasing sequence of sets $B_1\subset B_2\subset \cdots \subset B_n$ and integers $M_1<M_2<\cdots< M_n$ such that $B_n\subset[1,M_n]^d$. Then take $N_{j_{n+1}}$ with $N_{j_{n+1}}>M_n$, and let
\begin{gather*}
t_{n+1}=(M_n+2N_{j_{n+1}})e_1,\quad  B_{n+1}=B_n \cup (t_{n+1}+A_{j_{n+1}}),\quad
M_{n+1}=M_n+3N_{j_{n+1}}.
\end{gather*}
We can find that $B_n\subset B_{n+1}$, $M_n<M_{n+1}$ and $B_{n+1}\subset[1, M_{n+1}]^d$. We iterate this discussion inductively and get a sequence of sets $B_1\subset B_2\subset \cdots$. Define $B=\cup_{n\in \mathbb{N}} B_n$. It follows that
\[
|B\cap [1,M_n]^d|=|B_n \cap [1,M_n]^d |\geq |A_{j_{n}}| >\delta N_{j_{n}}^d\geq \frac{\delta}{4^d} M_{n}^d,
\]
which means that $B$ satisfies (\ref{ap-f6}). Let us show that $B$ does not contain any $(k,0,\mathbf{e})$-APs. Assume that $B$ contains some $(k,0,\mathbf{e})$-AP, then let $P$ be such a $(k,0,\mathbf{e})$-AP. There exists an integer $r$ such that $P\subseteq B_r$. By the choice of $A_{N_r}$, $P\subseteq t_r+ A_{N_r} $ does not hold. If $P$ intersects $B_{r-1}$ and $t_r+A_{N_r}$, then there exist two elements $p,p'\in P$ such that
\[
p\in t_r+A_r,\quad p'\in B_{r-1},\quad p'=p+\Delta e_1
\]
for some $\Delta>0$. Then we have
\[
\Delta \geq t_r=M_{r-1}+2N_{j_r}.
\]
Thus other terms of $P$ do not belong to $B_r$. This is a contradiction. Hence $P\subseteq B_{r-1}$. By iterating this discussion, we conclude that $P\subseteq A_1$, which is a contradiction. Therefore $B$ does not contain any $(k,0,\mathbf{e})$-APs.
\end{proof}

\section*{Acknowledgement}
The author would like to  thank Professor Kohji Mastumoto and Professor Masato Mimura for useful comments. This work is supported by Grantin-Aid for JSPS Research Fellow (Grant Number: 19J20878).

\end{document}